\documentclass[12pt, a4paper, leqno]{amsart}
\usepackage[T1]{fontenc}
\usepackage{hyperref}
\usepackage[ansinew]{inputenc} 
\usepackage[dvips,pdftex]{graphicx} 
\usepackage{amsmath,amssymb,amsthm,amsfonts,amstext}
\usepackage{xspace}
\usepackage{color}
\usepackage{newlfont}
\usepackage{longtable}
\usepackage[english]{babel}

\newcommand{\nc}{\newcommand}

\nc{\GG}{\mathfrak{G}}
\nc{\Z}{\mathbb{Z}}

\theoremstyle{plain}
\newtheorem{theorem}{\sc Theorem}[section]
\newtheorem{thm}[theorem]{\sc Theorem}
\newtheorem{lem}[theorem]{\sc Lemma}
\newtheorem{prop}[theorem]{\sc Proposition} 
\newtheorem{cor}[theorem]{\sc Corollary}

\newtheorem{rem}[theorem]{\sc Remark}

\theoremstyle{definition}
\newtheorem{define}[theorem]{\sc Definition}

\newcommand{\vfi}{\varphi}
\newcommand{\fa}{\mathfrak{a}} 
\newcommand{\fb}{\mathfrak{a}^{\vfi}}

\nc{\bea}{\begin{eqnarray}} \nc{\eea}{\end{eqnarray}}
\nc{\beae}{\begin{eqnarray*}} \nc{\eeae}{\end{eqnarray*}}

\nc{\gm}{\gamma}    
\nc{\gmm}{\raisebox{.4ex}{$\! \gamma \!$}} \nc{\st}{\star}

\nc{\la}{\leftarrow} 
\nc{\Ra}{\Rightarrow}
\nc{\La}{\Leftarrow}                
\nc{\ra}{\rightarrow}
\nc{\lla}{\longleftarrow}            
\nc{\lra}{\longrightarrow}
\nc{\Lla}{\Longleftarro     
\nc{\vfi}{\varphi}
\nc{\gmm}{\raisebox{.4ex}{$\! \gamma \!$}} \nc{\st}{\star}

\nc{\F}{\mathbb{F}}
\nc{\N}{\mathbb{N}}

\nc{\Z}{\mathbb{Z}} 

\nc{\Q}{\mathbb{Q}} 
\nc{\LL}{\mathcal{L}} 
\nc{\JJ}{\mathcal{J}}

\newcommand{\GG}{\mathfrak{G}} 

\nc{\NN}{\mathfrak{N}} 
\nc{\HH}{\mathfrak{H}}  
\newcommand{\cH}{\mathcal{H}}  
\newcommand{\cG}{\mathcal{G}}  
\nc{\cN}{\mathcal{N}}  
\nc{\fa}{\mathfrak{a}}w} 
\nc{\cH}{\mathcal{H}}  
\nc{\cG}{\mathcal{G}}  
\nc{\cN}{\mathcal{N}}  

\nc{\pgs}{\mathfrak{p}\mathfrak{g}\mathfrak{s}}
\nc{\pcr}{\mathfrak{p}\mathfrak{c}\mathfrak{r}}
\nc{\wg}{\widehat{g}}
\nc{\wh}{\widehat{h}}
\nc{\wx}{\widehat{x}}
\nc{\wy}{\widehat{y}}

\nc{\Char}{\operatorname{char}}
\nc{\Ker}{\operatorname{Ker}} 
\nc{\Imm}{\operatorname{Im}}
\nc{\Aut}{\operatorname{Aut}} 
\nc{\Cl}{\operatorname{Cl}}
\nc{\Orb}{\operatorname{Orb}} 
\nc{\noreq}{\trianglelefteq}

\nc{\bca}{\begin{cases}}                   
\nc{\eca}{\end{cases}}
\nc{\barr}{\begin{array}}                  
\nc{\earr}{\end{array}}
\nc{\bthm}{\begin{thm}}                
\nc{\ethm}{\end{thm}}
\nc{\bprop}{\begin{prop}}                      
\nc{\eprop}{\end{prop}}
\nc{\blem}{\begin{lem}}              
\nc{\elem}{\end{lem}}
\nc{\bins}{\begin{ins}}                      
\nc{\eins}{\end{ins}}
\nc{\bcor}{\begin{cor}}                      
\nc{\ecor}{\end{cor}}
\nc{\brem}{\begin{rem}}                     
\nc{\erem}{\end{rem}}
\nc{\bdeff}{\begin{define}}                        
\nc{\edeff}{\end{define}}

\topmargin -0.0in 
\textheight 44\baselineskip 
\advance\textheight by \topskip 
\oddsidemargin 0.2in
\evensidemargin  0.2in
\textwidth 6.0in
\parindent 15pt

\title[q-tensor square of nilpotent groups]{The q-tensor square of finitely generated nilpotent groups, 
$q \geq 0$}
\author[Rocco]{Nora\'i R. Rocco}
\address{Departamento de Matemat\'atica, Universidade de Bras\'ilia, Brasilia-DF, 70910-900 Brazil 
}
\email{norai@unb.br}
\author[Rodrigues]{Eunice C. P. Rodrigues}
\address{Departamento de Matem\'atica, Universidade Fe\-de\-ral de Ma\-to Gros\-so, Ron\-do\-n\'o\-po\-lis-MT, 
85735-001 Brazil }
\email{eunicepr@hotmail.com}
\subjclass[2010]{20F45, 20E26, 20F40}
\keywords{Non-abelian tensor square; q-tensor square; nilpotent  groups}
\date{\today}

\begin{document}

\begin{abstract}
In the present paper the authors extend to the $q-$tensor square 
$G \otimes^q G$ of a group $G$, $q$ a non-negative 
integer, some structural results due to R. D. Blyth, F. Fumagalli and M. Morigi 
concerning the non-abelian tensor square $G \otimes G$ ($q = 0$). 
The results are applied to the computation of 
$G \otimes^q G$ for finitely generated nilpotent 
groups $G$, specially for free nilpotent groups of finite rank. 
We also generalize to all $q \geq 0$ results of M. Bacon 
regarding an upper bound to the minimal number of generators 
of the non-abelian tensor square $G \otimes G$ when $G$ is a 
$n-$generator nilpotent group of class 2. 
We end by computing the $q-$tensor squares of the free $n-$generator 
nilpotent group of class 2, $n \geq 2$, for all $q \geq 0.$  
This shows that the above mentioned upper bound is also achieved for these groups when $q > 1.$ 
\end{abstract}

\maketitle

\section{Introduction}

Let $G$ and $G^{\varphi}$ be groups, isomorphic via 
$\varphi: g \mapsto g^{\varphi}$ for all $g \in G$. 
Consider the group $\nu(G),$ introduced in \cite{Rocco1} as 
\begin{equation} \label{eq:presenta0}
 \nu(G) = \left \langle G \cup G^{\varphi} \, | \, 
[g, h^{\varphi}]^k = 
 [g^k, (h^k)^{\varphi}] = [g, h^{\varphi}]^{k^{\varphi}}, \, \forall g,h,k 
 \in G \right \rangle.   
\end{equation}
It is a well known fact (see \cite{Rocco1}) that the subgroup $\Upsilon(G) = [G, G^{\varphi}]$ of $\nu(G)$ 
is isomorphic to the non-abelian tensor square 
$G \otimes G$, as defined by Brown and Loday 
in their seminal paper \cite{BL}. 
A modular version of the operator $\nu$ was considered in \cite{BR}, 
where for any non-negative integer $q$ the authors introduced and studied a 
group $\nu^{q}(G),$ which in turn is an extension of the so called q-tensor 
square of $G$, $G \otimes^q G,$ first defined by Conduch\'e and 
Rodriguez-Fernandez in \cite{Cond} 
(see also \cite{Ellis}, \cite{Brown}). 
In order to describe the group $\nu^q(G),$ if $q \geq 1$ then let 
$\widehat{\mathcal{G}} = \{\widehat{k} \, | \, k \in G \}$ be a set of symbols, one for each 
element of $G$ (for $q = 0$ we set $\widehat{\mathcal{G}} = \emptyset,$ the empty set). 
Let $F(\widehat{\mathcal{G}})$ be the free group over $\widehat{\mathcal{G}}$ and 
$\nu(G) \star F(\widehat{\mathcal{G}})$ 
be the free product of $\nu(G)$ and $F(\widehat{\mathcal{G}}).$ As $G$ and $G^{\varphi}$ 
are embedded into $\nu(G)$ we shall identify the elements of  $G$ (respectively of $G^{\varphi}$) 
with their respective images in $\nu(G) \ast F(\widehat{\mathcal{G}})$.
Let $J$ denote the normal closure in $\nu(G) \ast F(\widehat{\mathcal{G}})$ 
of the following elements, for all $\widehat{k}, \widehat{k_1} \in 
\widehat{\mathcal{G}}$ and $\ g, \, h  \in G:$
 \begin{gather} 
{g}^{-1} \, \widehat{k} \, g \; \widehat{(k^g)}^{-1}; \label{RR1} \\
(g^{\varphi})^{-1} \, \widehat{k} \, g^{\varphi} \; \widehat{(k^g)}^{-1}; 
\label{RR2} \\
(\widehat{k})^{-1} [g ,h^{\varphi}] \, \widehat{k} \; [g^{k^q}, (h^{k^{q}})^{\varphi}]^{-1}; 
\label{RR3} \\
(\widehat{k})^{-1} \, \widehat{k k_1} \; (\widehat{k_1})^{-1}
 \displaystyle({\prod_{i=1}^{q-1}}[k, (k^{-i}_1)^{\varphi}]^{k^{q-1-i}})^{-1};  \label{RR4} \\
 [\widehat{k}, \widehat{k_1}] \; [k^q, (k^{q}_1)^{\varphi}]^{-1}; \label{RR5} \\
 \widehat{[g, h]} \; [g, h^{\varphi}]^{-q}.  \label{RR6}
 \end{gather} 

\begin{define} \label{nu^q}
The group $\nu^q(G)$ is defined to be the factor group 
\begin{equation} 
\nu^q(G) := ( \nu(G) \ast F(\widehat{\mathcal{G}}) )/J.
\end{equation}
\end{define}
Note that for $q=0$ the sets of relations $(\ref{RR1})$ to
$(\ref{RR6})$ are empty; in this case we have $\nu^{0}(G) = \nu(G) \ast 
F(\widehat{\mathcal{G}}))/J \cong \nu(G)$.

Let $R_1, \ldots, R_6$ be the sets of relations corresponding to
$(\ref{RR1}), \ldots, (\ref{RR6})$, respectively, and let $R$ be their union, 
$R= \bigcup^6_{i=1}R_i$. Therefore, $\nu^q(G)$ has the presentation:
 \begin{equation*} \label{presenta1}
\nu^q(G)= \left \langle G, G^{\varphi}, \widehat{\mathcal{G}} \; | \; R, 
[g,h^{\varphi}]^k \, [g^k, (h^k)^{\varphi}]^{-1}, 
[g, h^{\varphi}]^{k^{\varphi}} \, [g^k, (h^k)^{\varphi}]^{-1}, \, \forall g, h, 
k \in G \right \rangle.
 \end{equation*}

\vskip -5pt
There is an epimorphism $\rho: \eta^q(G) \twoheadrightarrow G,  g \mapsto  g, h^{\varphi} \mapsto h,
 \widehat{k} \mapsto k^q$. On the other hand the inclusion of 
 $G$ into $\nu(G)$ induces a homo\-mor\-phism $\imath: G \to \nu^q(G)$. We have 
 $g^{\imath \rho} = g$ and thus $\imath$ is injective. 
 Similarly the inclusion of $G^{\vfi}$ into $\nu(G)$ induces a monomorphism 
$\jmath: G^{\vfi} \to \nu^q(G)$. Thus we shall identify the elements $g \in G$ 
and $g^{\vfi} \in G^{\vfi}$ with their respective images $g^{\imath}$ and 
$(g^{\vfi})^{\jmath}$ in $\nu^{q}(G)$. 

Now let $\GG$ denote the subgroup of $\nu^q(G)$ generated by the 
images of $\widehat{\cG}$. By relations $(\ref{RR3})$,  $\GG$ 
normalizes the subgroup $T = [G, G^{\varphi}]$ in $\nu^q(G)$ and hence 
${\Upsilon}^{q}(G)= T\GG =[G, G^{\varphi}] \GG$ is a normal subgroup of 
${\nu}^q(G)$. Thus we obtain $\nu^q(G) = G^{\varphi} \cdot (G \cdot \Upsilon^q(G)),$ where the dots 
mean internal semidirect products. It should be noted that the actions of $G$ and 
$G^{\vfi}$ on $\Upsilon^q(G)$ are those induced by the defining relations of 
$\nu^q(G)$: for any elements $g, x \in G$, $h^{\vfi}, y^{\vfi} \in G^{\vfi}$ 
and $\widehat{k} \in \widehat{\mathcal{G}}$ we have 
$[g, h^{\vfi}]^{x} = [g^{x}, (h^{x})^{\vfi}]$ and 
$(\widehat{k})^{x} = \widehat{(k^x)}$; 
similarly, 
$[g, h^{\vfi}]^{y^{\vfi}} = [g^{y}, (h^{y})^{\vfi}]$ and 
$(\widehat{k})^{y^{\vfi}} = \widehat{(k^y)}.$ In addition, for any $\tau \in 
\Upsilon^q(G)$, $(g \tau)^{y^{\vfi}} = g [g, y^{\varphi}] \tau^{y^{\vfi}} \in G \Upsilon^q(G)$. 

By \cite[Proposition 2.9]{BR} ${\Upsilon}^{q}(G)$ is isomorphic to the $q$-tensor square 
$G \otimes^q G,$ for all $q \geq 0$.  
We then get a result (see \cite[Corollary 2.11]{BR}) 
analogous to one due to Ellis in \cite{Ellis}: 
$\nu^q(G) \cong G \ltimes (G \ltimes (G \otimes^q G));$ 
this generalizes a similar result found in \cite{Rocco1} for $q=0.$ 

The commutator approach to $G \otimes G$ for the case $q=0,$ provided by the isomorphism between 
$G \otimes G$ and the subgroup $[G, G^{\vfi}]$ of $\nu(G)$ 
(see \cite{Rocco1}, and also \cite{EL}), has proven suitable to treat of 
non-abelian tensor products of groups, Schur multipliers and many other relevant invariants 
involving covering questions in groups; see for instance, references 
\cite{EL}, \cite{Rocco2}, \cite{Nakaoka}, \cite{BM}, 
\cite{EN}, \cite{NR} and the GAP Package ``POLYCYCLIC'' in 
\cite{ENGAP}. The extension of the existing theory from $q=0$ to all non-negative integers $q,$ as 
addressed for instance in \cite{BR}, broadens the scope of these connections, now in a \textit{hat} 
(``power'') and \textit{commutator} approach to the $q$-tensor square, $q \geq 0$. 

In section 2 we extend to $G \otimes^q G, q \geq 0,$ some structural results found 
in \cite{BFM} and \cite{Rocco2} concerning $G \otimes G$. 
In section 3 it is  established an upper bound for the minimal number of generators of 
$G \otimes^q G$ when $G$ is a finitely generated nilpotent group of class 2, 
thus generalizing a result of Bacon found in \cite{Bacon}. 
We end by computing the $q$-tensor square 
of the free nilpotent group of rank $n \geq 2$ and class 2, 
$\cN_{n,2}$, $q \geq 0$; 
this will show, as in the case $q = 0$ (see \cite[Theorem 3.2]{Bacon}), 
that the cited upper bound is also attained for these groups when $q > 1$, 
although in this case $\cN_{n,2} \otimes^q \cN_{n,2}$ is a non-abelian group. 

Notation is fairly standard (see for instance \cite{Robinson}). 
If $x$ and $y$ are elements of a group $G$ 
then we write $y^x$ for the conjugate $x^{-1} y x$ 
and $[x, y]$ for the commutator $x^{-1} y^{-1} x y$. 
Our commutators are left normed: 
$[x, y, z] = [[x, y], z]$ for all $x,y,z \in G,$ 
and so on, recursively, for commutators of 
higher weights. 
The order of $x$ (resp. of $G$) is written 
$o(x)$ (resp. $|G|$). 
As usual, $\gamma_i(G)$ denotes the $\text{i}^{\underline{\text{th}}}$ term of 
the lower central series of $G$. 
For future reference we recall the well known 
Hall-Witt identity: 
\begin{equation} \label{HallWitt}
  [x, y^{-1}, z]^{y} [y, z^{-1}, x]^{z}  [z, x^{-1}, y]^{x} = 1, \; \forall x, y, z \in G.  
\end{equation}
In view of the isomorphism given by 
\cite[Proposition 2.9]{BR}, 
from now on we identify 
$G \otimes^q G$ with the subgroup 
$\Upsilon^q(G) = [G, G^\vfi] \GG \leq \nu^q(G)$ and write 
$[g, h^\vfi]$ in place of $g \otimes h$, for all 
$g, h \in G$. 
Following \cite{BR} we write 
$\Delta^q(G)$ for the subgroup 
$\langle [g, g^\vfi] \vert g \in G \rangle \leq \Upsilon^q(G),$ 
which by Lemma~\ref{lem:basic} (vii) is a central subgroup of $\nu^q(G).$ 
We write $\tau^q(G)$ for the factor group 
$\nu^q(G)/ \Delta^q(G)$. 
The subgroup $\Upsilon^q(G)/ \Delta^q(G)$ 
of $\tau^q(G)$ 
is isomorphic to the q-exterior square 
$G \wedge^q G$. 
In order to avoid any confusion we usually write 
$[G, G^\vfi]_{\tau^(G)}$ 
to identify the q-exterior square 
$G \wedge^q G$ with the image of 
$[G, G^\vfi]$ in $\tau^q(G)$. 
We shall eventually write $T$ to denote the subgroup $[G, G^\vfi]$ of ${\nu^q(G)}$ in order to 
distinguish it from the nonabelian tensor square $G \otimes G \cong [G, G^\vfi] \leq \nu(G)$ in the 
case $q =0$.

The material presented here incorporates part of the doctoral thesis \cite{tese} of the second named author, 
written under the supervision of the first.

\section{Some Structural Results}

In this section we extend results found in \cite{BFM} and \cite{Rocco2} related to the non-abelian 
tensor 
square, from $G \otimes G$ to $G \otimes^q G$, $q \geq 0$. We begin by including some previous, 
technical results for future references. 

The following basic properties are consequences of 
the defining relations of $\nu^{q}(G)$. 
\blem \cite[Lemma 2.4]{BR} 
\label{lem:basic}
Suppose that $q \geq 0$. The following relations hold in $\nu^{q}(G)$, for all $g, h, x, y \in G$.
\begin{itemize}
\item[\rm{(i)}] $[g, h^{\varphi}]^{[x, y^{\varphi}]} = [g, h^{\varphi}]^{[x, y]}$; 
\item[\rm{(ii)}] $[g, h^{\varphi}, x^{\varphi}] = [g, h, x^{\varphi}] = [g, 
h^{\varphi}, x] = [g^{\vfi}, h, x^{\vfi}] = [g^{\vfi}, h^{\vfi}, x] = [g^{\vfi}, h, x]$;
\item[\rm{(iii)}] If $h \in G'$ (or if $g \in G'$) then 
$[g, h^{\varphi}][h, g^{\varphi}]=1$;
\item[\rm{(iv)}] $[\widehat{x}, [g, h^{\varphi}]]= [\widehat{x}, [g, h]]$;
\item[\rm{(v)}] $(\widehat{x})^g = \widehat{x} [x^q, g^{\varphi}]$;
\item[\rm{(vi)}] If $[g, h]=1$ then $[g, h^{\varphi}]$ and $[h, g^{\varphi}]$ are central elements 
of $\nu^{q}(G)$, of the same finite order dividing $q$. If 
in addition $g, h$ are torsion elements of orders $o(g), o(h)$, respectively, then the order of 
$[g, h^{\varphi}]$ divides the $\gcd(q, o(g), o(h))$. 
\item[\rm{(vii)}] $[g, g^{\varphi}]$ is central in $\nu^{q}(G)$, for all $g \in G$;
\item[\rm{(viii)}] $[g, h^{\varphi}][h, g^{\varphi}]$ is central in $\nu^{q}(G)$;
\item[\rm{(ix)}] $[g, g^{\varphi}] = 1$, for all $g \in G^{\prime}$;
\item[\rm{(x)}] If $[x, g] = 1 = [x, h]$, then $[g, h, x^{\varphi}] = 1 = [[g, h]^{\varphi}, x]$.
\end{itemize}
\elem

\bcor 
\label{cor:basic2}
Let G be any group and let $g, h$ be arbitrary elements in $G$. Then
\begin{itemize}
   \item[(i)] $[G^\prime, G^\vfi] = [G, {G^\prime}^\vfi]$;
    \item[(ii)]  $[G^\prime, Z(G)^\vfi] = 1$;
    \item[(iii)] If $gG^\prime = hG^\prime$ then $[g, g^\vfi] = [h,h^\vfi]$;  
    \item[(iv)] If $o^\prime(x)$ denotes the order of a coset $x G^\prime  \in G/ G^\prime$, 
then  
$[g, h^\vfi][h, g^\vfi]$ has order dividing the 
$\gcd(q, o^\prime(g),o^\prime(h))$; 
    \item[\rm{(v)}]  The order of $[h, h^\vfi]$ divides the 
    $\gcd(q, o^\prime(h)^2, 2 o^\prime(h))$.
\end{itemize}
\ecor
\begin{proof}
   Part (i) follows directly from 
Lemma~\ref{lem:basic} (iii)   
 (see also \cite[Corollary 1.2 (iii)]{BFM}). As for part (ii), see  \cite[Proposition 2.7 (i)]{Rocco1}.The 
remaining parts 
are appropriate adaptations of \cite[Lemma 3.1 (v)]{Rocco2}, using 
\eqref{RR6} and Lemma~\ref{lem:basic} (vi). 
\end{proof}

For our purposes we establish the following proposition, which may have its own interest. 
\begin{prop} \label{prop:nclass}
  Let $G$ be a nilpotent group of class 2. Then the following hold in $\nu^q(G)$:
 \begin{itemize}
    \item[(i)] $\GG$ centralizes $[G, G^\vfi];$
    \item[(ii)] $[G^\prime, G^\vfi] \; 
\left(= [G,  {G^\prime}^\vfi] \right)$ is a central subgroup of $\nu^q(G)$;
    \item[(iii)] $\Upsilon^q(G) \; \left( \cong G \otimes^q G \,\right)$  is nilpotent of class at 
most 2.
 \end{itemize}
\end{prop}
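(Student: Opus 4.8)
The plan is to prove the three parts in order, since (i) and (ii) will feed into (iii). Throughout, write $G'=\gamma_2(G)$ and recall that $G$ of class $2$ means $G'\le Z(G)$, so every commutator $[g,h]$ is central in $G$.

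\emph{Part (i).} I want to show $[\widehat{x},[g,h^\vfi]]=1$ for all $x,g,h\in G$. By Lemma~\ref{lem:basic}(iv), $[\widehat{x},[g,h^\vfi]]=[\widehat{x},[g,h]]$. Now $[g,h]\in G'\le Z(G)$, so I can use relation~\eqref{RR5}, or more directly Lemma~\ref{lem:basic}(v): $(\widehat{x})^{[g,h]}=\widehat{x}\,[x^q,[g,h]^\vfi]$. Since $[g,h]\in G'$, Lemma~\ref{lem:basic}(ii) lets me rewrite $[x^q,[g,h]^\vfi]=[x^q,[g,h]]^{(\cdot)^\vfi}$-type expressions as commutators that vanish because $[x^q,[g,h]]=1$ in $G$ (as $[g,h]$ is central in $G$). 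Hence $(\widehat{x})^{[g,h]}=\widehat{x}$, i.e. $[\widehat{x},[g,h]]=1$, and so $[\widehat{x},[g,h^\vfi]]=1$. Since $\GG$ is generated by the $\widehat{x}$ and $[G,G^\vfi]$ is generated by the $[g,h^\vfi]$, and these all commute, $\GG$ centralizes $[G,G^\vfi]$.

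\emph{Part (ii).} Here I must show each $[a,h^\vfi]$ with $a\in G'$ is central in $\nu^q(G)=G^\vfi\cdot(G\cdot\Upsilon^q(G))$. It suffices to check that such an element is centralized by $G$, by $G^\vfi$, by $\GG$, and by $[G,G^\vfi]$. Centralization by $[G,G^\vfi]$ is Lemma~\ref{lem:basic}(i) together with the fact that conjugation by $[x,y]$ acts trivially: since $a\in G'\le Z(G)$, $[a,h^\vfi]^{[x,y]}=[a^{[x,y]},(h^{[x,y]})^\vfi]=[a,h^\vfi]$. Centralization by $\GG$ is part (i). For $G$: $[a,h^\vfi]^x=[a^x,(h^x)^\vfi]=[a,(h^x)^\vfi]$ (as $a$ is central), and then I expand $h^x=h[h,x]$ with $[h,x]\in G'$, using bilinearity-type identities from Lemma~\ref{lem:basic}(ii)–(iii) to get $[a,(h[h,x])^\vfi]=[a,h^\vfi][a,[h,x]^\vfi]$; the second factor is a commutator of two elements of $G'$ in the $\vfi$-slot, which one shows is trivial via Lemma~\ref{lem:basic}(ii) (it equals $[a,h,[\,\cdot\,]]$-type weight-$3$ commutators that vanish since $G'$ is central of class $2$). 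Similarly for $G^\vfi$. Assembling these gives centrality.

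\emph{Part (iii).} Given (i) and (ii), I want $[\Upsilon^q(G),\Upsilon^q(G),\Upsilon^q(G)]=1$, where $\Upsilon^q(G)=[G,G^\vfi]\GG$. By (i), $\GG$ is central in $\Upsilon^q(G)$, so it suffices to show $[[G,G^\vfi],[G,G^\vfi],[G,G^\vfi]]=1$, i.e. that $T=[G,G^\vfi]$ is nilpotent of class $\le 2$. For generators $[g,h^\vfi]$ and $[x,y^\vfi]$, Lemma~\ref{lem:basic}(i) gives $[[g,h^\vfi],[x,y^\vfi]]=[[g,h^\vfi],[x,y]]$, and expanding conjugation by the central element $[x,y]$ via Lemma~\ref{lem:basic}(ii) shows this lies in $[G',G^\vfi]$ (or $[G,(G')^\vfi]$), which by (ii) is central. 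Hence $[T,T]\le Z(\nu^q(G))$, so $[T,T,T]=1$, and combined with the centrality of $\GG$, $\Upsilon^q(G)$ has nilpotency class at most $2$.

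\emph{Main obstacle.} The delicate point is the bookkeeping in parts (i) and (ii): I must be careful that expansions like $(\widehat{x})^{[g,h]}$ and $[g,h^\vfi]^x$ produce correction terms lying in $[G',G^\vfi]$, and then argue those correction terms are trivial \emph{before} (ii) is available, using only the class-$2$ hypothesis and Lemma~\ref{lem:basic}(ii) to collapse weight-$3$ commutators in which a $G'$-element occupies an inner slot. Getting the logical order right — showing $[G',G^\vfi]=1$-type vanishing for the \emph{correction} terms first, and only afterwards deducing centrality of the full $[G',G^\vfi]$ — is where care is needed; the relation $\eqref{RR6}$, $\widehat{[g,h]}=[g,h^\vfi]^q$, may also be needed to control the $\GG$-part.
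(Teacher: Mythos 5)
Your overall architecture --- establish (i), then (ii), then combine them for (iii) --- is the same as the paper's, and your part (ii) is essentially sound: the correction term $[a,[h,x]^\vfi]$ does vanish, because for a class-$2$ group $[G',(G')^\vfi]\le[G',Z(G)^\vfi]=1$ by Corollary~\ref{cor:basic2}(ii). However, two steps do not go through as written. In part (i), your vanishing of $[x^q,[g,h]^\vfi]$ is justified by the principle that $[a,b^\vfi]=1$ in $\nu^q(G)$ whenever $[a,b]=1$ in $G$. That principle is false --- Lemma~\ref{lem:basic}(vi) only makes such an element \emph{central of order dividing} $q$, and if the principle held, $[G',G^\vfi]$ would be trivial for every class-$2$ group and part (ii) would be vacuous. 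The element you need does die, but for that reason: $[x,[g,h]^\vfi]$ is central of order dividing $q$, so $[x^q,[g,h]^\vfi]=[x,[g,h]^\vfi]^q=1$. (The paper's route avoids the detour: by relation~\eqref{RR1}, $\widehat{x}^{[g,h]}=\widehat{(x^{[g,h]})}=\widehat{x}$ since $[g,h]\in Z(G)$.)

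The more serious gap is the opening move of part (iii): ``by (i), $\GG$ is central in $\Upsilon^q(G)$.'' Part (i) only says that $\GG$ centralizes $[G,G^\vfi]$; it does not make $\GG$ central in $\Upsilon^q(G)=[G,G^\vfi]\GG$, because $\GG$ need not be abelian. Indeed relation~\eqref{RR5} gives $[\widehat{k},\widehat{k_1}]=[k^q,(k_1^q)^\vfi]$, which is nontrivial already for $G=\cN_{n,2}$ and $q>1$ --- this is precisely why the paper's final computation yields a \emph{non-abelian} $q$-tensor square. Consequently your reduction to $[T,T,T]=1$ with $T=[G,G^\vfi]$ ignores the contribution of $[\GG,\GG]$ and does not suffice. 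The repair is short: since $[T,\GG]=1$, one has $(\Upsilon^q(G))'=T'\,\GG'$; here $T'=[G',(G')^\vfi]=1$, so $T$ is abelian and, being centralized by $\GG$, satisfies $T\le Z(\Upsilon^q(G))$; and $\GG'$ is generated by the elements $[k^q,(k_1^q)^\vfi]\in T$ from \eqref{RR5} (their $\GG$-conjugates coincide with themselves by (i)), so $\GG'\le T$. Hence $(\Upsilon^q(G))'\le T\le Z(\Upsilon^q(G))$ and the class is at most $2$.
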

\begin{proof}
   (i) follows straightforward from \ref{lem:basic} (iv) and relation \eqref{RR1}, once $G$ has nilpotency 
class 2.  

\noindent
(ii). For all $g, h \in G$ and $c \in G^\prime$ we have: 
\begin{align*}
    [c, g^\vfi]^h &= [c, g^\vfi][c, g^\vfi, h]  &\  \\
    \ &= [c, g^\vfi][c, g, h^\vfi] &\quad 
     \text{(by Lemma~\ref{lem:basic}, (i))} \\
    \ &= [c, g^\vfi]  &\quad \text{(since} \; [c, g] = 1, \; \text{as} \; G^\prime \leq Z(G)) \\
    \ &=[c, g^\vfi]^{h^\vfi}  &\quad (\text{by definition of} \; \nu^q(G)).
\end{align*} 
In addition, for all $\widehat{k} \in \widehat{\cG},$ by Lemma~\ref{lem:basic} (iv) and relations 
\eqref{RR1} we have that  
$\widehat{k}^{[c, g^\vfi]} =  \widehat{k}^{[c, g]} = \widehat{(k^{[c, g]})} = \widehat{k},$ since 
$[c, g] = 1$.
 This proves part (ii) (using the definition of $\nu^q(G)$), because $[G^\prime, G^\vfi]$ 
 is generated by all those $[c, g^\vfi]$ above.  

\noindent
(iii). That $\Upsilon^q(G)$ is nilpotent and has nilpotency class at most 3 follows from 
\cite[Proposition 2.7, (i)]{BR}. 
Now, $\Upsilon^q(G) = [G, G^\vfi]\GG$ and thus, once $\GG$ centralizes $[G, G^\vfi],$ we have  
   $$(\Upsilon^q(G))^\prime = [[G, G^\vfi]\GG, [G, G^\vfi]\GG] \
   \leq [G, G^\vfi]^\prime [\GG, \GG].$$
Induction arguments can be used, together with Lemma~\ref{lem:basic} (i), (ii), (iv) and (v) and 
defining relations \eqref{RR5} -- \eqref{RR6} to get: 
\begin{itemize}
   \item[(a)] $[G, G^\vfi]^\prime = [G^\prime, (G^\prime)^\vfi]$ \ 
(see also \cite[Proposition 1.3 (i)]{BFM} or \cite[Theorem 3.3]{Rocco1});
    \item[(b)] $[\GG, \GG] \leq 
\langle \widehat{\cG^\prime} \rangle [G^\prime, G^\vfi] \leq [G^\prime, G^\vfi].$ 
\end{itemize}
Consequently, $(\Upsilon^q(G))^\prime \leq [G^\prime, G^\vfi],$
which by part (ii) is central in $\nu^q(G)$. This completes the proof.  
\end{proof}

For a finitely generated abelian group $A,$ its q-tensor square $\Upsilon^q(A)$ can be computed by 
repeated applications of the  following two results from \cite{BR}.

\begin{lem} \cite[Corollary 2.16]{BR} \label{lem:directprod}
Let $G = N \times H$ be a direct product and set 
$\overline{N} = N/{N'N^q}, \; \overline{H} = H/{H'H^q}$. 
Then 
\begin{itemize}
\item[$(i)$] $\Upsilon^q(G) = \Upsilon^q(N) \times [N, H^{\varphi}] [H, N^{\varphi}] \times 
\Upsilon^q(H);$
\item[$(ii)$] $[N, H^{\varphi}] \cong (\overline{N} \otimes_{\Z_q} \overline{H}) \cong 
[H, N^{\varphi}]$. 
\end{itemize}
\end{lem}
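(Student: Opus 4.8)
The plan is to realize (i) as an internal direct product inside $\nu^q(G)$ and then extract (ii) from the structure of the cross term $[N,H^\vfi]$. Since $N$ and $H$ commute elementwise, Lemma~\ref{lem:basic}(vi) tells us that every $[n,h^\vfi]$ and $[h,n^\vfi]$ with $n\in N$, $h\in H$ is central in $\nu^q(G)$ of order dividing $q$; write $M=[N,H^\vfi][H,N^\vfi]$ for the central subgroup they generate. Writing a typical element of $G$ as $nh$ and expanding $[nh,(n_1h_1)^\vfi]$ by $[ab,c]=[a,c]^b[b,c]$, the defining relation $[g,h^\vfi]^k=[g^k,(h^k)^\vfi]$, and $[N,H]=1$, one obtains $[G,G^\vfi]=[N,N^\vfi]\,[H,H^\vfi]\,M$; applying \eqref{RR4} to $k\in N$, $k_1\in H$ (whose correction terms lie in the central subgroup $[N,H^\vfi]$) gives $\widehat{nh}\in\widehat{n}\,\widehat{h}\,M$, so $\GG$ is generated by $\widehat{N}$, $\widehat{H}$ and $M$, where $\widehat{N}=\{\widehat{n}\mid n\in N\}$ and similarly for $\widehat{H}$. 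Hence $\Upsilon^q(G)=[G,G^\vfi]\GG$ is generated by $A=\langle[N,N^\vfi],\widehat{N}\rangle$, $B=\langle[H,H^\vfi],\widehat{H}\rangle$ and the central $M$; here $A$, $B$ are subgroups since, by \eqref{RR3}, $\widehat{N}$ normalizes $[N,N^\vfi]$ and likewise for $H$.

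Next I would check that $A$ and $B$ centralize one another. Lemma~\ref{lem:basic}(i) gives $[n_1,n_2^\vfi]^{[h_1,h_2^\vfi]}=[n_1,n_2^\vfi]^{[h_1,h_2]}=[n_1,n_2^\vfi]$ since $[h_1,h_2]\in H$ centralizes $N$; Lemma~\ref{lem:basic}(iv), (v), (x) give $\widehat{n}^{[h_1,h_2^\vfi]}=\widehat{n}$ and $\widehat{h}^{[n_1,n_2^\vfi]}=\widehat{h}$; and \eqref{RR5} together with bilinearity and the $q$-torsion above gives $[\widehat{n},\widehat{h}]=[n^q,(h^q)^\vfi]=[n,h^\vfi]^{q^2}=1$. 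Hence $[A,B]=1$, so, since $M$ is central, $\Upsilon^q(G)=A\,M\,B$ is the internal central product of $A$, $M$ and $B$. For the triviality of intersections, the inclusions $N\hookrightarrow G$, $H\hookrightarrow G$ and the projections $G\twoheadrightarrow N$, $G\twoheadrightarrow H$ induce homomorphisms between the corresponding groups $\nu^q(\cdot)$ (functoriality of the construction, see \cite{BR}); since the composites $\nu^q(N)\to\nu^q(G)\to\nu^q(N)$ and the analogous one for $H$ are identities, $\Upsilon^q(N)$ and $\Upsilon^q(H)$ are carried isomorphically onto $A$ and $B$. The retraction $\pi_N\colon\nu^q(G)\to\nu^q(N)$ is the identity on $A$ and kills $B$ and $M$ (it sends each cross commutator to a commutator with $1$), so $A\cap MB=1$; symmetrically $B\cap MA=1$; and $\Phi=(\pi_N,\pi_H)\colon\nu^q(G)\to\nu^q(N)\times\nu^q(H)$ is injective on $AB$ with $M\subseteq\ker\Phi$, so $M\cap AB=1$. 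This gives (i).

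For (ii), $[N,H^\vfi]$ is abelian and generated by the elements $n\otimes h:=[n,h^\vfi]$; by centrality $[n_1n_2,h^\vfi]=[n_1,h^\vfi]^{n_2}[n_2,h^\vfi]=[n_1,h^\vfi][n_2,h^\vfi]$ and symmetrically in the second variable, and each $n\otimes h$ is killed by $q$, so the map $(n,h)\mapsto n\otimes h$, being bi-multiplicative with values killed by $q$, factors through $\overline{N}\otimes_{\Z_q}\overline{H}$ and yields a surjection $\overline{N}\otimes_{\Z_q}\overline{H}\twoheadrightarrow[N,H^\vfi]$; the same applies to $[H,N^\vfi]$. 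I expect the one genuine obstacle to be the injectivity of this surjection, i.e.\ that $[N,H^\vfi]$ carries no relation beyond bi-multiplicativity and $q$-torsion; I would deduce it from the universal property of the $q$-tensor product of \cite{Cond}, namely that the trivial mutual actions of $N$ and $H$ exhibit $[N,H^\vfi]$ as a quotient of $N\otimes^q H$, which for trivial actions equals $\overline{N}\otimes_{\Z_q}\overline{H}$, so the two surjections force an isomorphism — alternatively one builds a splitting homomorphism $[N,H^\vfi]\to\overline{N}\otimes_{\Z_q}\overline{H}$ directly from a normal form in $\nu^q(G)$. Everything else is bookkeeping with the defining relations \eqref{RR1}--\eqref{RR6} and the functorial retractions.
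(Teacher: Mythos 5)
First, a point of comparison: the paper itself gives no proof of this lemma --- it is imported verbatim as \cite[Corollary 2.16]{BR} --- so your attempt has to be measured against the argument in that reference, which (like its $q=0$ antecedents in \cite{GH} and \cite{BJR}) obtains the isomorphism in (ii) by explicitly constructing a homomorphism out of $\nu^q(G)$ into a concretely assembled target group and verifying all the defining relations. Your part (i) is essentially that standard argument and is sound: centrality and $q$-torsion of the cross commutators via Lemma~\ref{lem:basic}(vi), the generation statements for $[G,G^\vfi]$ and $\GG$ via \eqref{RR4}, the mutual centralization of $A$ and $B$, and the separation of the three factors by the functorial retractions $\pi_N$, $\pi_H$ are all correct.

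The genuine gap is the injectivity claim in part (ii). Your primary argument is circular: the universal property of $N\otimes^q H$ for the trivial mutual actions produces a surjection $N\otimes^q H=\overline{N}\otimes_{\Z_q}\overline{H}\twoheadrightarrow [N,H^\vfi]$, which is the \emph{same} map, in the \emph{same} direction, as the surjection you already extracted from bi-multiplicativity and $q$-torsion; two surjections from $\overline{N}\otimes_{\Z_q}\overline{H}$ onto $[N,H^\vfi]$ prove nothing about injectivity (and ``two surjections in opposite directions'' would not force an isomorphism of abelian groups in general either). What is actually needed is a homomorphism $[N,H^\vfi]\to\overline{N}\otimes_{\Z_q}\overline{H}$ splitting your surjection. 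Your fallback --- ``build a splitting homomorphism from a normal form in $\nu^q(G)$'' --- points the right way but cannot be executed as stated, since no normal form for $\nu^q(G)$ is available a priori; that is precisely what is at issue. The missing work is to define a target group $K$ (assembled from $\nu^q(N)$, $\nu^q(H)$, $\overline{N}\otimes_{\Z_q}\overline{H}$ and $\overline{H}\otimes_{\Z_q}\overline{N}$ with the appropriate actions), specify the images of the generators $g$, $g^\vfi$, $\widehat{k}$, and check the relations of $\nu(G)$ together with \eqref{RR1}--\eqref{RR6}; only then does the restriction to $[N,H^\vfi]$ invert your surjection. That verification is the substance of \cite[Corollary 2.16]{BR} and is absent from the proposal.
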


\begin{lem} \cite[Theorem 3.1]{BR} \label{lem:cyclic}
Let $C_n$ (resp. $C_{\infty}$) be the cyclic group of order $n$ (resp. $\infty$), $q$ a non-
negative integer and $d = \gcd(n, q).$ Then 
\begin{align*}
C_{\infty} \otimes^q C_{\infty} & \cong \; \; C_{\infty} \times C_q;  \\
C_n \otimes^q C_n \; \; & \cong 
\begin{cases} 
C_n \times C_d, & \text{if \; $d$ is odd}; \\
C_n \times C_d, & \text{if \; $d$ is even and either $4|n$ or $4|q$}; \\
C_{2n} \times C_{d/2}, & \text{otherwise.} 
\end{cases}
\end{align*}
\end{lem}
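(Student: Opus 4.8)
\medskip
\noindent\emph{Proof proposal.}
The plan is to work inside $\nu^q(G)$ for $G=\langle x\rangle$ (cyclic of order $n$, or infinite) and to show that $\Upsilon^q(G)$ is an abelian group on the two generators $\widehat x$ and $t:=[x,x^\vfi]$, whose presentation can then be read off. First I would use that $G$ is abelian: the identity $[g,(h_1h_2)^\vfi]=[g,h_2^\vfi]\,[g,h_1^\vfi]^{h_2^\vfi}$, combined with the defining relation $[g,h_1^\vfi]^{h_2^\vfi}=[g^{h_2},(h_1^{h_2})^\vfi]=[g,h_1^\vfi]$, makes the pairing $(g,h)\mapsto[g,h^\vfi]$ bilinear on $G$, so that $T=[G,G^\vfi]=\langle t\rangle$ is cyclic and, by Lemma~\ref{lem:basic}(vii), $t$ is central in $\nu^q(G)$. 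Next, iterating relation \eqref{RR4} with $k_1=x$ and evaluating the correction terms $[x^{m},(x^{-i})^\vfi]^{\,x^{m(q-1-i)}}=t^{-mi}$ by bilinearity, I would obtain the closed formula $\widehat{x^{m}}=\widehat{x}^{\,m}\,t^{-\binom{q}{2}\binom{m}{2}}$ for all $m$; hence $\GG=\langle\,\widehat{x^m}\mid m\,\rangle\subseteq\langle\widehat x,t\rangle$, so $\Upsilon^q(G)=T\,\GG=\langle\widehat x,t\rangle$, which is abelian because $t$ is central.

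It then remains to pin down the relations between $\widehat x$ and $t$. From \eqref{RR4} with $k_1=1$ one gets $\widehat 1=1$, and \eqref{RR6} with $g=h=x$ then gives $t^{q}=[x,x^\vfi]^{q}=\widehat{[x,x]}=\widehat 1=1$. When $G=C_n$, bilinearity and $x^n=1$ also give $t^{n}=[x^n,x^\vfi]=1$, so $t^{d}=1$ with $d=\gcd(n,q)$, while the closed formula at $m=n$ (together with $\widehat{x^n}=\widehat 1=1$) yields $\widehat x^{\,n}=t^{\,e}$, $e=\binom q2\binom n2$; when $G=C_\infty$ no relation of the latter type appears. Consequently $\Upsilon^q(C_\infty)$ is a quotient of $\langle u,v\mid[u,v],\,v^{q}\rangle\cong C_\infty\times C_q$, and $\Upsilon^q(C_n)$ is a quotient of the abelian group $A_{n,q}:=\langle u,v\mid[u,v],\,v^{d},\,u^{n}v^{e}\rangle\cong\Z^2/\langle(0,d),(n,e)\rangle$, which has order $dn$ and, by the Smith normal form of $\left(\begin{smallmatrix}0&d\\ n&e\end{smallmatrix}\right)$, decomposes as $C_{\gcd(d,n,e)}\times C_{dn/\gcd(d,n,e)}$.

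The decisive step --- and the one I expect to be the real obstacle --- is the matching lower bound: that these surjections are isomorphisms, i.e.\ that $t$ has order exactly $q$ in $\Upsilon^q(C_\infty)$ and that $\Upsilon^q(C_n)$ does not collapse below order $dn$. The presentation of $\nu^q$ alone will not give this; one must exhibit a concrete model, most naturally an explicit polycyclic(-by-finite) description of $\nu^q(C_n)$ --- respectively of $\nu^q(\Z)\cong\Z\ltimes(\Z\ltimes(\Z\times C_q))$ --- in which $\widehat x$ and $t$ are verified (by a collection/consistency check) to have the predicted orders, or equivalently a homomorphism of $\Upsilon^q(C_n)$ onto an abelian group of order $dn$; alternatively one may quote the known value of $G\otimes^q G$ for cyclic $G$ from the literature on the $q$-tensor product. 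Granting this, only the identification of $\gcd(d,n,e)$ is left, and this is elementary: since $d\mid n$ one has $\gcd(d,n,e)=\gcd(d,e)$, and a $p$-adic analysis of $e=\tfrac14 q(q-1)n(n-1)$ finishes it --- for odd $p$ one has $v_p(d)\le v_p(q)\le v_p(e)$, so $\gcd(d,e)$ and $d$ have the same odd part; and when $d$ is even (so $n$ and $q$ are both even) $v_2(e)=v_2(q)+v_2(n)-2$, which is $\ge v_2(d)=\min(v_2(q),v_2(n))$ exactly when $4\mid n$ or $4\mid q$, whereas otherwise $v_2(q)=v_2(n)=1$ makes $e$ odd. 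Hence $\gcd(d,e)=d$ when $d$ is odd, or when $d$ is even with $4\mid n$ or $4\mid q$, giving $\Upsilon^q(C_n)\cong C_d\times C_n$; and $\gcd(d,e)=d/2$ otherwise, giving $\Upsilon^q(C_n)\cong C_{d/2}\times C_{2n}$, while $C_\infty\otimes^q C_\infty\cong C_\infty\times C_q$ is the $n=\infty$ instance.
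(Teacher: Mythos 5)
\medskip
\noindent\emph{Review.}
Note first that the paper itself offers no proof of this lemma: it is imported verbatim from \cite[Theorem 3.1]{BR}, so there is no internal argument to compare yours against. That said, your upper-bound computation is correct and is essentially the computation underlying the cited result: bilinearity of $(g,h)\mapsto[g,h^\vfi]$ for abelian $G$, centrality of $t=[x,x^\vfi]$ via Lemma~\ref{lem:basic}~(vii), the closed formula $\widehat{x^m}=\widehat{x}^{\,m}\,t^{-\binom{q}{2}\binom{m}{2}}$ obtained by iterating \eqref{RR4} (this is exactly the identity recorded in the Remark following Lemma~\ref{lem:cyclic} in the paper), the relations $t^{q}=1$ from \eqref{RR6}, $t^{n}=1$ and $\widehat{x}^{\,n}=t^{e}$ with $e=\binom{q}{2}\binom{n}{2}$, and the Smith-normal-form and $2$-adic analysis identifying $\gcd(d,n,e)$ with $d$ or $d/2$ according to the stated case distinction. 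All of this checks out, with the minor caveat that your argument presupposes $q\geq 1$: for $q=0$ the set $\widehat{\mathcal{G}}$ is empty and relations \eqref{RR4}--\eqref{RR6} vanish, so that degenerate case must be read with the appropriate conventions and handled separately (there one simply gets $\Upsilon^0(C_n)=\langle t\rangle$).

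The genuine gap is the one you flag yourself: everything above only exhibits $\Upsilon^q(C_n)$ as a \emph{quotient} of $C_{\gcd(d,e)}\times C_{dn/\gcd(d,e)}$, and the defining presentation of $\nu^q(G)$ cannot by itself exclude further collapse. You are right that this cannot be waved away, and right about what would fill it: one must construct a surjection of $\Upsilon^q(C_n)$ onto a concrete abelian group of order $dn$ (respectively of $\Upsilon^q(C_\infty)$ onto $\Z\times C_q$). The standard route, and the one effectively taken in \cite{BR} building on \cite{Cond} and \cite{Ellis}, is to use the universal property of the $q$-tensor square: write down an explicit $q$-crossed pairing from $C_n\times C_n$, together with the assignment on the hat symbols, into the target group $A_{n,q}$, and verify the finitely many axioms (equivalently, run a consistency check on the resulting polycyclic presentation of $\nu^q(C_n)$). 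Without that step the argument proves only one inequality; with it, your reduction and the arithmetic identification of $\gcd(d,e)$ constitute a complete and correct proof of the lemma.
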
 

Thus, if $A = \prod^r_{i=1} C_i$ is a direct product of the cyclic groups $C_i, \, i = 1, \ldots, 
r$, where $C_i = \langle x_i \rangle$, then 
\[
   \Upsilon^q(A) = \prod_{i=1}^r \Upsilon^q(C_i) \times 
\prod_{1 \leq i < j \leq r}[C_i, C^\vfi_j][C_j, C^\vfi_i].
\]
Here we have $\Upsilon^q(C_i) = \langle [x_i, x^\vfi_i], \; \widehat{x_i} \rangle$ and 
$[C_i, C^\vfi_j][C_j, C^\vfi_i] = \langle \, 
[x_i, x^\vfi_j][x_j, x^\vfi_i], \; [x_i, x^\vfi_j] \, \rangle$. 
Since $\Delta^q(A) = \langle \, [a, a^\vfi] \, \vert \, a \in A \rangle$,  
we observe, like in \cite[Proposition 3.3]{Rocco2},
 that $\Delta^q(A) = \langle [x_i, x^\vfi_i], \, 
[x_j, x^\vfi_k][x_k, x^\vfi_j] \, \vert \, 1 \leq i \leq r, \, 1 \leq j < k \leq r \rangle$  
and thus it does not depend on the particular set $X = \{x_1, \ldots, x_r \}$ of generators of $A$. 
Consequently, we can write 
\[
  \Upsilon^q(A) = \Delta^q(A) E^q_{X}(A),                                                           
 \]
where $E^q_{X}(A) = \langle \widehat{x_i}, \, [x_j, x^\vfi_k] \, \vert \, 
1 \leq i \leq r, \, 1 \leq j < k \leq r \rangle.$  

\begin{rem}
   If $x$ and $y$ are commuting elements in any group 
$G$ then by relations \eqref{RR4}--\eqref{RR6} 
and Lemma~\ref{lem:basic} (vi) we get 
\[
\widehat{x y} = 
\widehat{x} \, \widehat{y} \, [x, y^\vfi]^{-{q \choose{2}}} = 
\widehat{x} \, \widehat{y} \, [y, x^\vfi]^{-{q\choose{2}}} = \widehat{y x},                          
\]
and hence $[x, y^\vfi]^{-{q\choose{2}}} = 
[y, x^\vfi]^{-{q\choose{2}}}.$ In particular, if $q = 2$ then $[x, y^\vfi] = [y, x^\vfi].$ This 
means for instance that in the  decomposition of $\Upsilon^q(A)$ found above, the groups 
$[C_i,  C^\vfi_j]$ and $[C_j, C^\vfi_i]$ are not necessarily independent.   
Moreover, the identity 
$\widehat{(x^n)} = (\widehat{x})^n [x, x^\vfi]^{-{n \choose 2}{q \choose 2}}$ shows that the 
subgroups 
$\langle \widehat{x_i} \rangle$ and $\langle [x_i, x^\vfi_i] \rangle$ 
of 
$\Upsilon^q(C_i)$ 
may have non trivial intersection. Consequently, unlike the case $q = 0$, the 
subgroup 
$E^q_{X}(A)$ is not necessarily a complement of
$\Delta^q(A)$ (see also  \cite[Section 2]{BFM}).
\end{rem}

Now let $G$ be any group and write $G^{ab} = G/ G^\prime$. The natural projection 
$G \twoheadrightarrow G^{ab}$ induces an epimorphism  
$\pi : \nu^q(G) \to \nu^q(G^{ab})$. We denote by $\pi_{0}$ the restriction of $\pi$ to 
$\Upsilon^q(G).$ By \cite[Lemma 2.14 (iii)]{BR} we have that 
$\Ker(\pi_{0}) = [G^{\prime}, G^{\vfi}][G, {G^\prime}^{\vfi}] \langle \widehat{\mathcal{G}^{\prime}}
 \rangle,$ which reduces to 
$[G^{\prime}, G^\vfi] \langle \widehat{\mathcal{G}^{\prime}} \rangle,$ by force of 
Corollary~\ref{cor:basic2} (i). In 
addition, using relations \eqref{RR4} and \eqref{RR6}, an induction argument as in 
the proof of the Proposition\ref{prop:nclass} (ii)  shows 
that $\widehat{(\mathcal{G}^{\prime})} \leq [G^{\prime}, G^\vfi]$ and, consequently,
$\Ker(\pi_0) = [G^\prime, G^\vfi].$ 
 
The next Lemma extends \cite[Lemma 2.1]{BFM} to all $q \geq 0$ 
(see also \cite[Proposition 3.3]{Rocco2}). We shall omit the proof. 
\begin{lem} \label{lem:fg1}
   Let $q$ be a non negative integer and $G$ be a group such that $G^{ab}$ is finitely generated. 
Assume that $G^{ab}$ is a direct product of the cyclic groups $C_i = \langle x_i G^\prime \rangle,$ 
for $i = 1, \ldots, r$ and set 
$$E^q(G) = \langle \widehat{x_i}, \, [x_j, x^\vfi_k] \, \vert \, 1 \leq i \leq r, \, 1 \leq j < k \leq r \
rangle \, 
[G^\prime, G^\vfi].$$ Then,  
\begin{itemize}
   \item[(i)] $\Delta^q(G) = \langle [x_i, x^\vfi_i], \, [x_j, x^\vfi_k][x_k, x^\vfi_j] \, \vert \, 
1 \leq i \leq r, \, 1 \leq j < k \leq r \rangle;$
    \item[(ii)] $\Upsilon^q(G) = \Delta^q(G) E^q(G).$
\end{itemize}
\end{lem}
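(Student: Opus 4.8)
The plan is to mimic the proof of \cite[Lemma 2.1]{BFM}, now using the power generators $\widehat{x_i}$ alongside the commutators, and to reduce everything modulo $[G',G^\vfi]$, where the structure is already understood from the abelian case discussed above.

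\textbf{Step 1: reduce to $G^{ab}$.} First I would invoke the epimorphism $\pi_0 : \Upsilon^q(G) \twoheadrightarrow \Upsilon^q(G^{ab})$ introduced just before the statement, whose kernel was shown to be exactly $[G',G^\vfi]$. Since $[G',G^\vfi]$ is contained in $E^q(G)$ by the very definition of $E^q(G)$, it suffices to prove both (i) and (ii) after applying $\pi_0$, i.e. to prove them for $A = G^{ab}$ with its cyclic decomposition $A = \prod_{i=1}^r C_i$, $C_i = \langle x_iG' \rangle$. But that is precisely the content of the displayed computation preceding the Remark: there we already recorded that $\Delta^q(A) = \langle [x_i,x_i^\vfi],\,[x_j,x_k^\vfi][x_k,x_j^\vfi]\rangle$ (independently of the chosen generating set, as in \cite[Proposition 3.3]{Rocco2}) and that $\Upsilon^q(A) = \Delta^q(A)E^q_X(A)$ with $E^q_X(A) = \langle \widehat{x_i},\,[x_j,x_k^\vfi]\rangle$. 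So the abelian case of (i) and (ii) is in hand.

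\textbf{Step 2: lift to $G$.} For (i), I would argue that $\Delta^q(G)$ maps onto $\Delta^q(A)$ under $\pi_0$, and that the generators $[x_i,x_i^\vfi]$ and $[x_j,x_k^\vfi][x_k,x_j^\vfi]$ of $G$ already lie in $\Delta^q(G)$; the reverse inclusion is the point requiring care. Given an arbitrary $g \in G$, write $g = w \cdot c$ with $w$ a word in the $x_i$ and $c \in G'$; by Corollary~\ref{cor:basic2} (iii), $[g,g^\vfi] = [w,w^\vfi]$ depends only on the coset $gG'$, so it suffices to handle $g = \prod x_i^{a_i}$. Then a bilinear-type expansion of $[g,g^\vfi]$ — using the centrality of $\Delta^q(G)$ (Lemma~\ref{lem:basic} (vii), (viii)) and Lemma~\ref{lem:basic} (iii) to collect cross terms into products $[x_j,x_k^\vfi][x_k,x_j^\vfi]$ and diagonal terms into powers of $[x_i,x_i^\vfi]$ — exhibits $[g,g^\vfi]$ as a product of the claimed generators. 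This is exactly the computation done in \cite[Proposition 3.3]{Rocco2} for $q = 0$, and the relations \eqref{RR4}--\eqref{RR6} do not interfere because $\Delta^q(G)$ is a commutator subgroup of $\Upsilon^q(G)$ living inside $T = [G,G^\vfi]$. For (ii), since $\pi_0(\Upsilon^q(G)) = \Upsilon^q(A) = \Delta^q(A)E^q_X(A) = \pi_0(\Delta^q(G)E^q(G))$ and $\Ker(\pi_0) = [G',G^\vfi] \leq E^q(G) \leq \Delta^q(G)E^q(G)$, the equality $\Upsilon^q(G) = \Delta^q(G)E^q(G)$ follows by a standard preimage argument.

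\textbf{Main obstacle.} The delicate point is the reverse inclusion in (i): showing that every diagonal element $[g,g^\vfi]$ decomposes over the fixed generating set. Two things must be checked carefully: that the cross-terms arising when expanding $[\prod x_i^{a_i}, (\prod x_i^{a_i})^\vfi]$ genuinely reorganize into the symmetric products $[x_j,x_k^\vfi][x_k,x_j^\vfi]$ (here one needs Lemma~\ref{lem:basic} (viii) for centrality of these symmetric products, so exponents can be pulled out freely), and that passing from $x_i$ to $x_i^{a_i}$ introduces only further powers of the listed generators and of elements of $[G',G^\vfi]$ — the latter controlled by Corollary~\ref{cor:basic2} and the identity $\widehat{(x^n)} = (\widehat{x})^n[x,x^\vfi]^{-\binom n2\binom q2}$ from the Remark. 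Once the abelian model and these reduction tools are set up, the verification is routine and parallels \cite{BFM} and \cite{Rocco2}, which is why the authors omit it.
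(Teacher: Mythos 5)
Your proposal is essentially the argument the paper has in mind: it omits the proof precisely because it is the $q$-version of \cite[Lemma 2.1]{BFM} and \cite[Proposition 3.3]{Rocco2}, namely the reduction modulo $\Ker(\pi_0)=[G^\prime,G^{\vfi}]$ for part (ii) together with the direct expansion of $[g,g^{\vfi}]$ over a transversal of $G^\prime$ for part (i), using Corollary~\ref{cor:basic2}~(iii) and the centrality of $[x,x^{\vfi}]$ and $[x,y^{\vfi}][y,x^{\vfi}]$. One small caution: your Step~1 claim that it ``suffices to prove both (i) and (ii) after applying $\pi_0$'' is not literally right for (i), since $\Delta^q(G)\cap[G^\prime,G^{\vfi}]$ need not be trivial (this is exactly the subgroup $N$ studied in Theorem~\ref{thm:fg2}); but your Step~2 replaces that reduction with the correct direct computation, so no real gap remains.
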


With the above notation, let $\pi_1$ denote the restriction of $\pi_0$ to $\Delta^q(G)$,  
$\pi_1 : \Delta^q(G) \twoheadrightarrow \Delta^q(G^{ab}),$ and let $N = \Ker(\pi_1)$. Therefore, $N 
= \Delta^q(G) \cap  [G^\prime, G^\vfi]$  $\left(= \Delta^q(G) \cap  E^q(G) \right),$ a central 
subgroup of $\Upsilon^q(G)$.  

  Our next theorem generalizes, to all $q \geq 0,$ Proposition 2.2 in \cite{BFM}, which in turn 
improves Proposition 3.3 in \cite{Rocco2}. 
\begin{thm} \label{thm:fg2}
   Let $q \geq 0$ and assume that $G^{ab}$ is finitely generated. Then, with the 
notation of Lemma~\ref{lem:fg1}, the following hold: 
  \begin{itemize}
     \item[(i)] $\Upsilon^q(G)/ N \cong \Delta^q(G^{ab}) \times (G \wedge^q G)$;
    \item[(ii)] If $q \geq 1$ and $q$ is odd, then $N = 1$ and thus 
 $\Delta^q(G) \cong \Delta^q(G^{ab})$ and $\Upsilon^q(G) \cong \Delta^q(G^{ab}) \times 
(G \wedge^q G);$ 
    \item[(iii)] For $q = 0$ or $q \geq 2$ and $q$ even, if $G^{ab}$ has no element of order two or 
if $G^\prime$ has a complement in $G$, then also $N = 1,$ $\Delta^q(G) \cong \Delta^q(G^{ab})$ and 
$\Upsilon^q(G) \cong \Delta^q(G^{ab}) \times (G \wedge^q G);$
    \item[(iv)] For $q \geq 2$ and $q$ even, if $G^{ab}$ has no element of order 2, then 
$\Delta^q(G)$ is a homocyclic abelian group of exponent $q,$ of rank $t + 1 \choose 2$;
    \item[(v)] If $G^{ab}$ is free abelian of rank $t$, then the conclusion of the previous item 
holds for all $q \geq 1,$ while $\Delta^q(G)$ is free abelian of rank  
$t + 1 \choose 2$ if $q = 0$. 
  \end{itemize}
\end{thm}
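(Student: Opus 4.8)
The plan is to obtain (i) formally from the factorisation $\Upsilon^q(G)=\Delta^q(G)\,E^q(G)$ and then to reduce (ii)--(v) entirely to control of the restriction $\pi_1\colon\Delta^q(G)\to\Delta^q(G^{ab})$, whose target is computable through Lemmas~\ref{lem:directprod} and~\ref{lem:cyclic}. For (i): by Lemma~\ref{lem:fg1}(ii) one has $\Upsilon^q(G)=\Delta^q(G)E^q(G)$, while $\Delta^q(G)$ is central in $\nu^q(G)$ by Lemma~\ref{lem:basic}(vii) and, by construction, $\Delta^q(G)\cap E^q(G)=N$. Modulo $N$ the images of $\Delta^q(G)$ and of $E^q(G)$ have trivial intersection and generate $\Upsilon^q(G)/N$; the image of $\Delta^q(G)$ is central and a one-line commutator computation shows the image of $E^q(G)$ is normal, so $\Upsilon^q(G)/N=(\Delta^q(G)/N)\times(E^q(G)/N)$. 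Now $\Delta^q(G)/N\cong\Delta^q(G^{ab})$ since $\pi_1$ is surjective with kernel $N$, and $E^q(G)/N=E^q(G)/(E^q(G)\cap\Delta^q(G))\cong\Upsilon^q(G)/\Delta^q(G)=G\wedge^q G$ by the second isomorphism theorem. Thus (i) holds, and for (ii)--(iii) it suffices to prove $N=1$ in each case.

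For the hypothesis ``$G'$ has a complement $H$'' in (iii): the composite $H\hookrightarrow G\twoheadrightarrow G^{ab}$ is an isomorphism, hence induces an isomorphism $\Delta^q(H)\to\Delta^q(G)\xrightarrow{\pi_1}\Delta^q(G^{ab})$; on the other hand, if $g=ch$ with $c\in G'$, $h\in H$, then $gG'=hG'$, so $[g,g^\vfi]=[h,h^\vfi]$ by Corollary~\ref{cor:basic2}(iii), and therefore $\Delta^q(G)$ is already the image of $\Delta^q(H)$. Hence $\Delta^q(H)\to\Delta^q(G)$ is onto and, being part of an injective composite, also injective, so $\pi_1$ is an isomorphism and $N=1$.

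For the remaining cases of (ii) and (iii) ($q$ odd, or $q\ge2$ even with $G^{ab}$ without $2$-torsion) I would prove $N=1$ by matching orders. Write $G^{ab}=\prod_{i=1}^{r}C_i$ with $C_i=\langle x_iG'\rangle$ of order $n_i$. By Lemma~\ref{lem:directprod}, $\Upsilon^q(G^{ab})=\prod_i\Upsilon^q(C_i)\times\prod_{i<j}[C_i,C_j^\vfi][C_j,C_i^\vfi]$, and the generators $[\overline{x_i},\overline{x_i}^\vfi]$ and $\overline{\delta_{jk}}:=[\overline{x_j},\overline{x_k}^\vfi][\overline{x_k},\overline{x_j}^\vfi]$ of $\Delta^q(G^{ab})$ (Lemma~\ref{lem:fg1}(i)) lie in pairwise distinct direct factors, so
\[
\Delta^q(G^{ab})=\prod_{i}\langle[\overline{x_i},\overline{x_i}^\vfi]\rangle\times\prod_{j<k}\langle\overline{\delta_{jk}}\rangle .
\]
Using Lemma~\ref{lem:cyclic} (and the resulting description of $\Delta^q$ of a cyclic group) one checks that under the present hypotheses $o([\overline{x_i},\overline{x_i}^\vfi])=\gcd(q,n_i^2,2n_i)$ and $o(\overline{\delta_{jk}})=\gcd(q,n_j,n_k)$ --- the crucial point being that the ``collapse'' in the last case of Lemma~\ref{lem:cyclic} occurs only when $\gcd(n_i,q)$ is even, i.e.\ only when $G^{ab}$ has $2$-torsion and $q$ is even, which is precisely what we exclude. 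On the side of $\Delta^q(G)$, Corollary~\ref{cor:basic2}(iv),(v) bound $o([x_i,x_i^\vfi])$ and $o(\delta_{jk})$ from above by the very same numbers (note $o'(x_i)=n_i$), while $\pi_1$ cannot increase order; hence these orders equal those of their $\pi_1$-images. Consequently the natural surjection of the external direct product $P:=\prod_iC_{e_i}\times\prod_{j<k}C_{f_{jk}}$ (with $e_i=\gcd(q,n_i^2,2n_i)$, $f_{jk}=\gcd(q,n_j,n_k)$) onto $\Delta^q(G)$, composed with $\pi_1$, is the identity of $P$ under the identification just obtained; therefore that surjection is injective, $\Delta^q(G)\cong P\cong\Delta^q(G^{ab})$, and $\pi_1$ is an isomorphism. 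The displayed isomorphisms in (ii) and (iii) then follow from (i). I expect this order-matching step --- in particular the verification that the upper bounds of Corollary~\ref{cor:basic2} are actually attained in $\Delta^q(G^{ab})$, which is exactly where the parity of $q$ and the $2$-torsion hypothesis are genuinely used --- to be the main obstacle, resting on an explicit computation of $\Delta^q(C_n)$ and $\Delta^q(C_\infty)$ extracted from the proof of Lemma~\ref{lem:cyclic}.

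Finally, for (iv) and (v), take $G^{ab}$ free abelian of rank $t$ (all $n_i=\infty$), so $G^{ab}$ has no $2$-torsion and part (iii) gives $N=1$ and $\Delta^q(G)\cong\Delta^q(G^{ab})$. By the decomposition above, $\Delta^q(G^{ab})=\prod_{i=1}^{t}\Delta^q(C_\infty)\times\prod_{j<k}\langle\overline{\delta_{jk}}\rangle$; since $\Delta^q(C_\infty)\cong C_q$ and $o(\overline{\delta_{jk}})=q$ for $q\ge1$, this is $C_q^{t}\times C_q^{\binom{t}{2}}=C_q^{\binom{t+1}{2}}$, homocyclic of exponent $q$, while for $q=0$ one gets $\Delta(C_\infty)\cong\Z$ and $\langle\overline{\delta_{jk}}\rangle\cong\Z$, so $\Delta^0(G)\cong\Z^{\binom{t+1}{2}}$. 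This yields (v), and (iv) is the subcase $q\ge2$ even; its hypothesis forces all cyclic factors of $G^{ab}$ to be infinite, since a finite factor of odd order $n$ would contribute a cyclic summand of order $\gcd(q,n)<q$ and so destroy homocyclicity of exponent $q$.
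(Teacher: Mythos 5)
Your part (i) and your treatment of the complemented case of (iii) match the paper's argument. For the remaining cases of (ii)--(iii), however, you take a genuinely different route, and it has a gap at precisely the step you yourself flag as ``the main obstacle''. The paper never computes $\Delta^q(G^{ab})$ exactly: it quotes \cite[Proposition 3.5]{Rocco2} to the effect that $N$ is generated by the diagonal elements $[x_i,x_i^{\vfi}]^{n_i}$ alone, and then kills these using only the \emph{upper} bound of Corollary~\ref{cor:basic2}(v) (if $q$ is odd, or if every $n_i$ is odd, then $\gcd(q,n_i^2,2n_i)$ divides $n_i$). Your order-matching argument instead needs the \emph{exact} orders of all generators of $\Delta^q(G^{ab})$, in particular $o(\overline{\delta_{jk}})=\gcd(q,n_j,n_k)$, i.e.\ that the upper bounds of Corollary~\ref{cor:basic2}(iv) are attained. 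This cannot be ``extracted from the proof of Lemma~\ref{lem:cyclic}'': that lemma concerns a single cyclic group, whereas $\overline{\delta_{jk}}$ lives in the mixed factor $[C_j,C_k^{\vfi}][C_k,C_j^{\vfi}]$, and the paper's own Remark following Lemma~\ref{lem:cyclic} warns that $[C_j,C_k^{\vfi}]$ and $[C_k,C_j^{\vfi}]$ need not be independent for $q\geq 1$ (for $q=2$ one even has $[x_j,x_k^{\vfi}]=[x_k,x_j^{\vfi}]$, so $\overline{\delta_{jk}}=[x_j,x_k^{\vfi}]^{2}$). So the lower bound on $o(\overline{\delta_{jk}})$ is a genuine missing computation, essentially a $q$-analogue of \cite[Proposition 3.3]{Rocco2}, not a routine check; and it is load-bearing, since a surjection of abelian groups that preserves the orders of a generating set need not be injective (consider $C_2\times C_2\twoheadrightarrow C_2$). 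Without it your composite $P\to\Delta^q(G)\to\Delta^q(G^{ab})$ is not known to be the identity of $P$, and $N=1$ does not follow.

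Second, your (iv) does not prove (iv) as stated: the hypothesis there only excludes elements of order $2$ in $G^{ab}$, so odd torsion is allowed, and you replace this by ``$G^{ab}$ free abelian''. Your instinct is sound --- an odd cyclic factor $C_{n_i}$ with $\gcd(q,n_i)>1$ contributes a nontrivial $[x_i,x_i^{\vfi}]$ of order $\gcd(q,n_i)$, which already conflicts with homocyclicity of exponent $q$ and rank $\binom{t+1}{2}$ (try $G=C_3$, $q=6$), and indeed the paper's own proof of (iv) asserts $[x_i,x_i^{\vfi}]=1$ in this situation, which fails for the same example. But then you should say explicitly that you are proving a corrected statement rather than silently strengthening the hypothesis; and your derivation of (v) still rests on the unproved claim $o(\overline{\delta_{jk}})=q$ for a pair of infinite cyclic factors, which again is not contained in Lemma~\ref{lem:cyclic}.
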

 \begin{proof}
    (i): By Lemma~\ref{lem:fg1} (ii) we have 
\[
  \frac{\Upsilon^q(G)}{N} = \frac{\Delta^q(G) E^q(G)}{N}  \cong \frac{\Delta^q(G)}{N} \times 
\frac{E^q(G)}{N}.
\]
Now, $\Delta^q(G)/ N \cong \Delta^q(G^{ab})$ and 
\[
E^q(G)/ N = E^q(G)/ (\Delta^q(G) \cap E^q(G)) \cong  \Upsilon^q(G)/ \Delta^q(G) 
\cong G \wedge^q G.
\]
This proves (i).

\vskip 10pt
\noindent
(ii), (iii), (iv) and (v): Suppose that the torsion subgroup of $G^{ab}$ is the direct product of 
the 
cyclic groups $\langle x_i G^\prime \rangle$ of order $n_i, \, 1 \leq i \leq s,$ and let the free 
part of $G^{ab}$ be the direct product of the cyclic groups 
$\langle y_j G^\prime \rangle, \, 1 \leq j \leq t$. 
Thus, $o^\prime(x_i) = n_i$ and $o^\prime(y_j) = \infty$. 
Set $X : = \{ x_i \, \vert \, 1 \leq i \leq s \}$ and $Y : = \{ y_j \, \vert \, 
1 \leq j \leq t \}.$ Then $G$ is generated by 
$X \cup Y \cup G^\prime$. Using Lemma~\ref{lem:basic} and 
Corollary~\ref{cor:basic2} (see also \cite[Proposition 3.3 and Remark 5]{Rocco2}) 
we find that $\Delta^q(G)$ is generated by the set 
$\Delta_{X} \cup \Delta_{Y} \cup \Delta_{XY}$, where
\begin{align*}
   \Delta_X & = \{ [x_i, x^\vfi_i], \, [x_j, x^\vfi_k][x_k, x^\vfi_j] \, \vert \, 
1 \leq i \leq s, \, 1 \leq j < k \leq s \}, \\
  \Delta_Y & = \{ [y_j, y^\vfi_j], \, [y_k, y^\vfi_l][y_l, y^\vfi_k] \, \vert \, 
1 \leq j \leq t, \, 1 \leq k < l \leq t \}, \\
 \Delta_{XY} & = \{[x_i, y^\vfi_j][y_j, x^\vfi_i] \, \vert \, 1 \leq i \leq s, 
\, 1 \leq j \leq t \}. 
\end{align*}
Set $n_{ik} = \gcd(n_i, n_k).$ Parts (iv) and (v) of Corollary~\ref{cor:basic2} 
give 
$([x_i, x^\vfi_k][x_k, x^\vfi_i])^{n_{ik}} = 1$ and  $([x_i, y^\vfi_j][y_j, 
x^\vfi_i])^{n_i} = 1$, while 
$[x_i, x^\vfi_i]^{n_i} \in \Ker(\pi_0)$, $\forall i, k = 1, \ldots, s, i < k, 
\forall j = 1, \ldots t$. 
Actually, $\Ker(\pi_0)$ is generated by the set $\{[x_i, x^\vfi_i]^{n_i} \, \vert \, 1 \leq i \leq 
s \},$ by \cite[Proposition 3.5]{Rocco2}. Again by Corollary~\ref{cor:basic2} (v), we get that if 
$n_i (= o^\prime(x_i))$ is odd, then $[x_i, x^\vfi_i]^{n_i} = 1$, while  $[x_i, x^\vfi_i]^{2 n_i} = 
1$ if $n_i$ is even. Consequently, $N = \Ker(\pi_0)$ is an elementary abelian 2-group of rank at 
most $r_2(G^{ab})$, the $2-rank$ of $G^{ab}$ (see also \cite[Corollary 3.6]{Rocco2}). On the other 
hand, we should take into account that $q$ is involved in the upper bound found in Corollary~\ref{cor:basic2} (
v). 
Thus, if $q \geq 1$ and $q$ is odd, then 
$\gcd(q, 2n_i) = \gcd(q, n_i) \mid n_i$ and hence $[x_i,x^\vfi_i]^{n_i} = 1,$ 
for all $i = 1,\ldots, s.$ Therefore $N = 1$ in this case, proving part (ii). 
It should be also clear that $N = 1$ if $r_2(G^{ab}) = 0.$ This proves (iii) in 
the case where $G^{ab}$ has no element of order 2. Now, if $G^\prime$ has a 
complement $C$ in $G,$ then every $g \in G$ can be written as $g = xh$ with 
$x \in C$ and $h \in G^\prime$. 
Corollary~\ref{cor:basic2} (iii) says that 
$[g, g^\vfi] = [x, x^\vfi]$ and thus 
$\Delta^q(G) = \langle [x, x^\vfi] \, \vert \, x \in C \rangle = 
\Delta^q(G^{ab}).$ 
This completes the prof of part (iii) (see also \cite[Proposition 2.2]{BFM}). 
Finally, we observe that 
$[x_i, x^\vfi_i] = 1 = [x_i, y^\vfi_j][y_j, x^\vfi_i]$ 
in the case where $r_2(G^{ab}) = 0$ and $q \geq 2$, $q$ even. 
Here we have $\Delta^q(G) = \langle \Delta_Y \rangle$ 
and $[y_j, y^\vfi_j]^q = 1 = ([y_k, y^\vfi_l][y_l, y^\vfi_k])^q, 
\forall j, k, l = 1, \ldots, t, \, k < l.$ 
That each of these $t+1 \choose 2$ generators has order $q$ follows immediately 
from Lemma~\ref{lem:cyclic}, where we found $C_{\infty} \otimes^q C_{\infty} \cong C_{\infty} \times C_q$. 
Part (v) follows by an analogous argument , as in 
part (iv); the last assertion can be also found in \cite[Corollary 3.6]{Rocco2}. 
The proof is complete.   
\end{proof}

We state the next Lemma for easy of reference, which in a certain sense extends ideas found in 
\cite{Miller} for the case $q = 0.$ 
A proof is given in \cite{BL} for $q=0$ (see also  \cite[Proposition 3.2]{BFM} for an 
alternative proof for this case) 
and in \cite{ER89} for $q \geq 1$.  
\begin{lem} \label{lem:qext}
	Let $F/R$ be a free presentation of a group $G$. Then 
	\[
	G \wedge^q G \cong F^\prime F^q / [R, F] R^q.
	\]
\end{lem}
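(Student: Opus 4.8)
The plan is to reduce the statement to a computation inside $\nu^q(F)$ and then push it down to $\tau^q(F)=\nu^q(F)/\Delta^q(F)$, exploiting the concrete presentation of $\nu^q(G)$ given in the excerpt. First I would recall that the $q$-exterior square $G\wedge^q G$ is, by our conventions, the image $[G,G^\vfi]_{\tau^q(G)}$ of $T=[G,G^\vfi]$ in $\tau^q(G)=\nu^q(G)/\Delta^q(G)$, equivalently $\Upsilon^q(G)/\Delta^q(G)$. So the target isomorphism is $\Upsilon^q(F)/\Delta^q(F)\cdot(\text{image of }R)\cong F'F^q/[R,F]R^q$. The natural map to exhibit is the one sending the generator $[f_1,f_2^\vfi]\mapsto [f_1,f_2]\,[R,F]R^q$ and $\widehat{f}\mapsto f^q\,[R,F]R^q$ for $f,f_1,f_2\in F$; one must check this is well defined (it respects relations \eqref{RR1}--\eqref{RR6} and the two bracket relations of $\nu^q$, which is a routine verification using that $F'F^q/[R,F]R^q$ is the relevant quotient) and surjective (clear, since $F'$ is generated by commutators and $F^q$ by $q$-th powers).

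The heart is injectivity, and I expect this to be the main obstacle. The standard strategy, going back to the $q=0$ arguments in \cite{BL} and \cite{BFM}, is to build an inverse map. One constructs a homomorphism $F'F^q\to \Upsilon^q(F)/\Delta^q(F)$ — or, more robustly, first a crossed-pairing-type map $F\times F\to$ some auxiliary group — and shows $[R,F]R^q$ lies in its kernel, so that it factors through $F'F^q/[R,F]R^q$. Concretely, using the presentation of $G\wedge^q G$ as $G\otimes^q G$ modulo the $\Delta^q$-relations, one knows $G\wedge^q G$ is generated by symbols $g\wedge h$ subject to bilinearity-type relations plus $g\wedge g=1$ and the $q$-power/hat relations; mapping $f_1 R\wedge f_2 R\mapsto [f_1,f_2]\,[R,F]R^q$ is checked to be compatible with all these relations precisely because passing from $F$ to $G=F/R$ introduces exactly the relations $[R,F]$ (for the commutator part) and $R^q$ together with $[R,F]$ (for the hat/power part, since $\widehat{r}$ for $r\in R$ maps into the $q$-th powers modulo $[R,F]$ by the argument already used in the excerpt showing $\widehat{(\mathcal{G}')}\le[G',G^\vfi]$). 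The two homomorphisms are then mutually inverse on generators, giving the isomorphism.

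An alternative, perhaps cleaner route I would also keep in mind: invoke that $G\wedge^q G$ sits in the known exact sequence relating it to the Schur-type multiplier and $G^{ab}\otimes_{\Z_q}(\text{something})$, and identify $F^\prime F^q/[R,F]R^q$ with the same object by the five-lemma applied to a commutative diagram of such sequences for $F$ (where everything is trivial in the right spots because $F$ is free). But since \cite{ER89} is cited as already containing the $q\ge1$ case and \cite{BL} the $q=0$ case, the intended proof is surely just to \emph{quote} those sources after noting the conventions match; the excerpt explicitly says ``We shall omit the proof.'' So in practice the proof is a one-line pointer to \cite{BL} and \cite{ER89}, with the only genuine content being the remark that our $G\wedge^q G$ (defined via $\tau^q(G)$) agrees with the $q$-exterior square of \cite{Cond, ER89}; that identification follows from \cite[Proposition 2.9]{BR} and the definition of $\tau^q(G)$ as $\nu^q(G)/\Delta^q(G)$.

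I would therefore present the proof essentially as: the isomorphism is established in \cite{BL} for $q=0$ and in \cite{ER89} for $q\ge1$; the only point to check is that the $q$-exterior square used there coincides with $\Upsilon^q(G)/\Delta^q(G)$ under our identifications, which is immediate from \cite[Proposition 2.9]{BR}. The hard part, if one wanted a self-contained proof, is the injectivity/inverse-map construction sketched above; everything else (well-definedness, surjectivity) is a direct check against relations \eqref{RR1}--\eqref{RR6}.
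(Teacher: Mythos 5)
Your proposal matches the paper's treatment exactly: the paper gives no proof of this lemma, simply recording that it is established in \cite{BL} for $q=0$ (with an alternative argument in \cite[Proposition 3.2]{BFM}) and in \cite{ER89} for $q\geq 1$, which is precisely the one-line pointer you settle on in your final paragraph. Your additional sketch of the direct map/inverse-map construction and the remark that the conventions must be matched via \cite[Proposition 2.9]{BR} goes beyond what the paper records, but is consistent with it and correctly identifies where the real work would lie in a self-contained proof.
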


Notice that there is a map 
\[
\rho: \nu^{q}(G) \lra G, \, g  \longmapsto g, \, g^{\varphi} \longmapsto g \; 
\text{and} \; \widehat{k} \longmapsto  k^q.
\] 
Let $\rho^{\prime} = \rho{|}_{\Upsilon^{q}(G)} : 
\Upsilon^{q}(G) \lra G, 
[g_1,{g_2}^{\varphi}] \longmapsto  [g_1,g_2], \widehat{k} \longmapsto  k^q.$ 

\noindent
Following \cite{BR} we 
write $\theta^q(G) = \Ker(\rho)$ and $\mu^q(G) = \Ker(\rho^{\prime}) = \Upsilon^q(G) \cap \theta^q(G).$ 
It follows that $\Upsilon^q(G)/ \mu^q(G) \cong G^{\prime} G^q.$ If $G = F /R$ is a free 
presentation of $G$, then 
\begin{equation} \label{eq:Schur}
H^2(G, \Z_q) \cong R \cap F^\prime F^q / R^q [R, F] = 
(G \wedge^q G) \cap M^q(G),
\end{equation}
where $M^q(G)= R / R^q [R, F]$ is the \textit{q-multiplier} of $G$. From this we obtain 
(see for instance \cite[Theorem 2.12]{BR}): 
\begin{equation} \label{eq:h2G}
\mu^q(G) / \Delta^q(G) \cong H^2(G, \Z_q),    
\end{equation}
for all $q \geq 0$.

\begin{cor} \label{cor:free}
   Let $F_n$ be the free group of rank $n$. Then 
\begin{itemize}
   \item[(i)] For $q \geq 1,$
   \[
  F_n \otimes^q F_n \cong C^{n+1 \choose 2}_q \times (F_n)^\prime (F_n)^q.
  \]
  \item[(ii)] (\cite[Proposition 6]{BJR}) For $q = 0,$ 
  \[
  F_n \otimes F_n \cong C^{n+1 \choose 2}_{\infty} \times (F_n)^{\prime}.
  \]
\end{itemize}
\end{cor}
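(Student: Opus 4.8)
The plan is to apply Theorem~\ref{thm:fg2} to the group $G = F_n$, whose abelianization $(F_n)^{ab}$ is free abelian of rank $n$, and then combine the result with the homological identifications in \eqref{eq:Schur}--\eqref{eq:h2G}. First I would invoke part (v) of Theorem~\ref{thm:fg2}: since $(F_n)^{ab} \cong \Z^n$ is free abelian (of rank $t = n$), we obtain for all $q \geq 1$ that $N = 1$, that $\Delta^q(F_n)$ is homocyclic abelian of exponent $q$ and rank $\binom{n+1}{2}$ — that is, $\Delta^q(F_n) \cong C_q^{\binom{n+1}{2}}$ — and that
\[
\Upsilon^q(F_n) \cong \Delta^q(F_n) \times (F_n \wedge^q F_n) \cong C_q^{\binom{n+1}{2}} \times (F_n \wedge^q F_n).
\]
For $q = 0$ the same theorem gives $\Delta^0(F_n)$ free abelian of rank $\binom{n+1}{2}$, i.e. $C_\infty^{\binom{n+1}{2}}$, which will yield part (ii); note part (ii) is also attributed to \cite{BJR}, so I would present it as essentially a citation plus the specialization of our theorem.

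The remaining point is to identify the exterior factor $F_n \wedge^q F_n$ with $(F_n)^\prime (F_n)^q$. For $q = 0$ this is immediate: $F_n \wedge F_n \cong (F_n)^\prime$ since $F_n$ is free, so the Schur-multiplier-type obstruction in \eqref{eq:Schur} vanishes (a free group has trivial Schur multiplier). For $q \geq 1$ I would use Lemma~\ref{lem:qext} with the trivial free presentation $F_n = F_n / 1$, i.e. $R = 1$: this gives
\[
F_n \wedge^q F_n \cong (F_n)^\prime (F_n)^q / [1, F_n]\, 1^q = (F_n)^\prime (F_n)^q.
\]
Equivalently, one sees from \eqref{eq:Schur} that $H^2(F_n, \Z_q) \cong R \cap F^\prime F^q / R^q[R,F] = 1$ when $R = 1$, so by \eqref{eq:h2G} we get $\mu^q(F_n) = \Delta^q(F_n)$, and since $\Upsilon^q(F_n)/\mu^q(F_n) \cong (F_n)^\prime (F_n)^q$, the complemented decomposition above identifies the non-$\Delta$ factor with $(F_n)^\prime (F_n)^q$. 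Assembling these pieces yields
\[
F_n \otimes^q F_n \cong \Upsilon^q(F_n) \cong C_q^{\binom{n+1}{2}} \times (F_n)^\prime (F_n)^q
\]
for $q \geq 1$, and the $q = 0$ analogue with $C_\infty$ in place of $C_q$ and $(F_n)^\prime$ in place of $(F_n)^\prime(F_n)^q$.

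I do not anticipate a serious obstacle here: the statement is genuinely a corollary, and every ingredient (the structure theorem, the exterior-square identification, the vanishing of the $q$-multiplier of a free group) is already available in the excerpt. The one place warranting a line of care is checking that the direct-product decomposition in Theorem~\ref{thm:fg2}(v) is the \emph{internal} one compatible with the map $\rho^\prime$, so that the complement to $\Delta^q(F_n)$ is genuinely (isomorphic to) $\Upsilon^q(F_n)/\mu^q(F_n) \cong (F_n)^\prime(F_n)^q$ and not merely abstractly isomorphic to $F_n \wedge^q F_n$; but since $N = \mu^q(F_n) \cap \Delta^q(F_n)$-type considerations collapse ($\mu^q(F_n) = \Delta^q(F_n)$ because $H^2(F_n,\Z_q) = 1$), this is routine. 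I would therefore write the proof as: (1) specialize Theorem~\ref{thm:fg2}(v) to get the splitting and the $\Delta$-factor; (2) compute $F_n \wedge^q F_n$ via Lemma~\ref{lem:qext} with $R = 1$; (3) combine, and separately record the $q = 0$ case citing \cite{BJR}.
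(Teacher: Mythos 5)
Your proposal is correct and follows essentially the same route as the paper: specialize Theorem~\ref{thm:fg2} (parts (ii), (iii), (v)) to get $\Upsilon^q(F_n) \cong \Delta^q(F_n^{ab}) \times (F_n \wedge^q F_n)$ with the stated structure of the $\Delta$-factor, then apply Lemma~\ref{lem:qext} with the trivial presentation $R = 1$ to identify $F_n \wedge^q F_n$ with $(F_n)^\prime(F_n)^q$ (resp.\ $(F_n)^\prime$ for $q=0$). The additional remarks on the vanishing of $H^2(F_n,\Z_q)$ and compatibility with $\rho^\prime$ are sound but not needed beyond what the paper itself records.
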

\begin{proof}
 Since $F^{ab}_n$ is free abelian of rank $n$, by Theorem~\ref{thm:fg2} (ii), (iii) and (v), we 
have:
 \[
\Upsilon^q(F_n) \cong \Delta^q(F^{ab}_n) \times (F_n \wedge^q F_n).
\]
 (i): If $q \geq 1$ then 
$\Delta^q(F^{ab}_n) \cong C^{n+1 \choose 2}_q$ and,  
by Lemma~\ref{lem:qext} (i) with  $G = F_n$ and $R = 1,$  
$$F_n \wedge^q F_n \cong (F_n)^\prime (F_n)^q.$$ 
This proves (i). 

\noindent
(ii): If $q = 0$ then 
$\Delta^q(F^{ab}_n) \cong C^{n+1 \choose 2}_{\infty}$ and, again by the previous Lemma,  
$$F_n \wedge F_n \cong (F_n)^\prime.$$ 
This completes the  proof.   
\end{proof}
\begin{cor} \label{cor:freenil}
   Let $\mathcal{N}_{n,c} = F_n / \gamma_{c+1}(F_n)$ be the free nilpotent group of class $c \geq 
1$ and rank $n > 1$. Then
\begin{itemize}
   \item[(i)] For $q \geq 1,$
   \[
  \mathcal{N}_{n,c} \otimes^q \mathcal{N}_{n,c} \cong 
C^{n+1 \choose 2}_q \times 
\frac{(F_n)^\prime (F_n)^q}{\gamma_{c+1}(F_n)^q 
\gamma_{c+2}(F_n)}.
  \]
  \item[(ii)] (\cite[Corollary 1.7]{BMM}) For $q = 0,$ 
  \[
  \mathcal{N}_{n,c} \otimes \mathcal{N}_{n,c} \cong C^{n+1 \choose 2}_{\infty} \times 
(\mathcal{N}_{n,c+1})^{\prime}.
  \]
\end{itemize}
\end{cor}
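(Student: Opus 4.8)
The plan is to derive Corollary~\ref{cor:freenil} from Corollary~\ref{cor:free} by applying a free presentation of $\mathcal{N}_{n,c}$ and the formulae already established. First I would fix the free presentation $\mathcal{N}_{n,c} = F_n/R$ with $R = \gamma_{c+1}(F_n)$. Since $(\mathcal{N}_{n,c})^{ab} = F_n^{ab}$ is free abelian of rank $n$, Theorem~\ref{thm:fg2}(ii),(iii),(v) applies verbatim (for all $q \geq 1$ by (v), and for $q = 0$ by (iii), as $F_n^{ab}$ has no $2$-torsion), giving
\[
  \Upsilon^q(\mathcal{N}_{n,c}) \cong \Delta^q\big((\mathcal{N}_{n,c})^{ab}\big) \times \big(\mathcal{N}_{n,c} \wedge^q \mathcal{N}_{n,c}\big),
\]
and $\Delta^q((\mathcal{N}_{n,c})^{ab}) = \Delta^q(F_n^{ab}) \cong C_q^{n+1\choose 2}$ for $q \geq 1$ (resp. $C_\infty^{n+1 \choose 2}$ for $q=0$), exactly as in the proof of Corollary~\ref{cor:free}.

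The main point is then to identify the $q$-exterior square. By Lemma~\ref{lem:qext} applied to the presentation $F_n/R$ with $R = \gamma_{c+1}(F_n)$,
\[
  \mathcal{N}_{n,c} \wedge^q \mathcal{N}_{n,c} \cong \frac{F_n^\prime F_n^q}{[R, F_n]\, R^q} = \frac{(F_n)^\prime (F_n)^q}{[\gamma_{c+1}(F_n), F_n]\, \gamma_{c+1}(F_n)^q}.
\]
Now $[\gamma_{c+1}(F_n), F_n] = \gamma_{c+2}(F_n)$ by definition of the lower central series, and $\gamma_{c+1}(F_n)^q$ is the subgroup generated by $q$-th powers of elements of $\gamma_{c+1}(F_n)$; so the denominator is $\gamma_{c+2}(F_n)\,\gamma_{c+1}(F_n)^q$, which is exactly the group appearing in the statement. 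This yields part (i). For part (ii), set $q = 0$: the denominator collapses to $\gamma_{c+2}(F_n)$, so $\mathcal{N}_{n,c} \wedge \mathcal{N}_{n,c} \cong (F_n)^\prime/\gamma_{c+2}(F_n) = (\mathcal{N}_{n,c+1})^\prime$, recovering the formula of \cite[Corollary 1.7]{BMM}.

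I expect no serious obstacle: the argument is a direct transcription of the proof of Corollary~\ref{cor:free}, replacing $R = 1$ by $R = \gamma_{c+1}(F_n)$. The only mild subtlety is bookkeeping with $R^q$ versus $F_n^q$ inside the quotient — one must note that $\gamma_{c+1}(F_n)^q \leq F_n^q \cap \gamma_{c+1}(F_n)$ and that it is precisely $R^q$ in the notation of Lemma~\ref{lem:qext} (the verbal subgroup generated by $q$-th powers of elements of $R$, not $R \cap F_n^q$), so that the displayed quotient $\dfrac{(F_n)^\prime (F_n)^q}{\gamma_{c+1}(F_n)^q \gamma_{c+2}(F_n)}$ makes sense and is well-defined as written. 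One should also confirm that the hypotheses of Theorem~\ref{thm:fg2} are met for every $q \geq 0$ here, which holds because $(\mathcal{N}_{n,c})^{ab}$ is free abelian: part (v) covers $q \geq 1$ and part (iii) covers $q = 0$.
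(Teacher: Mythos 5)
Your proposal is correct and follows essentially the same route as the paper: the paper's own (very terse) proof simply invokes the argument of Corollary~\ref{cor:free} with $R = \gamma_{c+1}(F_n)$ and the observation that $[R, F_n] = \gamma_{c+2}(F_n)$ in Lemma~\ref{lem:qext}, which is exactly what you spell out. Your added care about $R^q$ being the verbal subgroup generated by $q$-th powers of elements of $R$, and about which clause of Theorem~\ref{thm:fg2} applies for each $q$, is a faithful and slightly more explicit rendering of the same argument.
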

\begin{proof}
   (i) and (ii) follow by similar arguments as above, taking into account that here we have 
$R = \gamma_{c+1}(F_n)$ 
and thus $[R, F]$, as in Lemma~\ref{lem:qext}, is precisely $\gamma_{c+2}(F_n)$.
\end{proof}

\section{q-Tensor Squares of Nilpotent Groups of Class 2}

In this section we restrict our considerations to finitely generated nilpotent groups $G$ of class two. We 
begin with a 
general result concerning polycyclic groups found in 
\cite{BR}; this generalizes to all $q \geq 0$ a result due to Blyth and Morse in \cite{BM} for $q=0$, which in 
turn extends 
to all polycyclic groups a similar result for finite solvable groups found in \cite{Rocco2}.   

\begin{lem} (\cite[Corolary 3.6]{BR}) \label{lem:poly} 
Let $G$ be a polycyclic group with a polycyclic generating sequence $\pgs(G) = \left( {\fa}_1, 
\ldots,  {\fa}_n \right).$  Then 
\begin{itemize}
\item[(i)] $[G, G^\vfi]$, a subgroup of $\nu^q(G), \, q \geq 0$, is generated by 
\[
[G, G^\vfi] = \biggl\langle [\fa_i, \fb_i], [\fa_i, \fb_j][\fb_j, \fa_i], 
[\fa^{\alpha}_i, (\fb_j)^{\beta}],   
\text{for $1 \leq i < j \leq n$}, \, 1 \leq k \leq n \biggr\rangle, 
\]
\item[(ii)] $\Upsilon^q(G)$, a subgroup of $\nu^q(G), \, q \geq 1$, is generated by 
\[
\Upsilon^q(G) = \biggl\langle [\fa_i, \fb_i], [\fa_i, \fb_j][\fb_j, \fa_i], 
[\fa^{\alpha}_i, (\fb_j)^{\beta}], \widehat{(\fa_k)}, \; 
\text{for $1 \leq i < j \leq n$}, \, 1 \leq k \leq n  \biggr\rangle, 
\]
\footnotesize{where    
 $\alpha = 
\begin{cases} 
1 & \text{if} \ o(\fa_i) < \infty \\
\pm 1 & \text{if} \ o(\fa_i) = \infty
\end{cases} \; \text{and} \;
\beta = 
\begin{cases} 
1 & \text{if} \ o(\fa_j) < \infty \\
\pm 1 & \text{if} \ o(\fa_j) = \infty.
\end{cases}$}
\vskip 10pt
\item[(iii)] $\Delta^q(G)$ is generated by the set 
$\{ [\fa_i, \fb_i], [\fa_i, \fb_j][\fb_j, \fa_i], \; \text{for} \; 1 \leq i < j \leq n \}.$
\end{itemize} 
\end{lem}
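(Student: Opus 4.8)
The plan is to reduce everything to the defining relations of $\nu^q(G)$ together with the basic identities already collected, in particular Lemma~\ref{lem:basic} and the observations made after it. I would argue as follows. Since $\pgs(G) = (\fa_1, \ldots, \fa_n)$ is a polycyclic generating sequence, every element of $G$ is a (normal) word in the $\fa_i$, and by the defining relations of $\nu^q(G)$ the assignments $g \mapsto [g, h^\vfi]$ and $k \mapsto \widehat{k}$ behave multiplicatively up to correction terms lying in $[G, G^\vfi]$: precisely, $[g_1 g_2, h^\vfi] = [g_1, h^\vfi]^{g_2}[g_2, h^\vfi]$ and the dual identity in the second variable, while $\widehat{k_1 k_2}$ differs from $\widehat{k_1}\,\widehat{k_2}$ by a product of commutators $[k_i, (k_j^{\pm})^\vfi]$ via relation~\eqref{RR4}. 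Feeding the polycyclic expressions of $g$ and $h$ into these formulas and using Lemma~\ref{lem:basic}(i)--(ii) to absorb conjugations, one sees that $[G, G^\vfi]$ is generated by the ``basic'' commutators $[\fa_i^{\pm}, (\fa_j^{\pm})^\vfi]$, $i \le j$; the diagonal terms $[\fa_i, \fa_i^\vfi]$ and the symmetrised terms $[\fa_i, \fa_j^\vfi][\fa_j, \fa_i^\vfi]$ appear because these are exactly the central combinations that survive when one reorders factors (Lemma~\ref{lem:basic}(iii),(vii),(viii)). This yields (i). For (ii) one adds the generators $\widehat{\fa_k}$: by relation~\eqref{RR4} the general $\widehat{k}$ is a product of $\widehat{\fa_k^{\pm}}$ and commutators already in $[G, G^\vfi]$, and by Lemma~\ref{lem:basic}(v) the identity $\widehat{\fa_k^{-1}} = (\widehat{\fa_k})^{-1}\,[\fa_k^{\,-q}\!,\fa_k^\vfi]$-type relations let one dispense with the inverses $\widehat{\fa_k^{-1}}$; since $\Upsilon^q(G) = [G, G^\vfi]\GG$ by definition, this gives the stated generating set. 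For (iii), recall $\Delta^q(G) = \langle [g, g^\vfi] \mid g \in G\rangle$; expanding $[g, g^\vfi]$ for $g$ a polycyclic word and using Corollary~\ref{cor:basic2}(iii) (which says $[g, g^\vfi]$ depends only on $gG'$) together with Lemma~\ref{lem:basic}(iii),(vii),(ix) to kill the $G'$-part and to identify the cross terms, one finds $[g,g^\vfi]$ is a product of the $[\fa_i, \fa_i^\vfi]$ and the $[\fa_i, \fa_j^\vfi][\fa_j, \fa_i^\vfi]$, exactly as in the abelian computation carried out before Lemma~\ref{lem:fg1}.

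The bookkeeping I would organise as a double induction: an outer induction on the syllable length of the polycyclic word for $g$ (reducing $[g, h^\vfi]$ to the case $g = \fa_i^{\pm}$ using $[g_1 g_2, h^\vfi] = [g_1, h^\vfi]^{g_2}[g_2, h^\vfi]$ and noting that conjugation by $g_2 \in G$ sends the generating set into itself modulo lower-length terms, by Lemma~\ref{lem:basic}(i)--(ii)), and then a parallel induction on the length of $h$. The finitely many remaining ``atomic'' commutators $[\fa_i^{a}, (\fa_j^{b})^\vfi]$ with $a, b \in \Z$ are then reduced to the listed range of exponents $\alpha, \beta$ by the same two identities: when $o(\fa_i) < \infty$ one may take the exponent in $\{0, 1, \ldots, o(\fa_i)-1\}$ and then observe that all powers are generated by the single commutator with exponent $1$ (up to central correction terms already in the list), whereas when $o(\fa_i) = \infty$ one needs both signs $\pm 1$ since there is no relation collapsing them. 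The handling of the hat generators in (ii) is entirely analogous, with~\eqref{RR4} playing the role of the commutator expansion and~\eqref{RR5},~\eqref{RR6} used to keep the correction terms inside $[G, G^\vfi]$.

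Since this is quoted as \cite[Corollary 3.6]{BR}, I would in the write-up simply cite that reference rather than reproduce the argument; if a self-contained proof were wanted, the main obstacle is not any single hard step but the sheer accounting needed to verify that every correction term produced while moving factors past one another, or while passing between $\widehat{k_1 k_2}$ and $\widehat{k_1}\widehat{k_2}$, actually lies in the subgroup generated by the claimed list — this requires invoking Lemma~\ref{lem:basic}(iii) at each reordering to see that the ``antisymmetric part'' $[\fa_i, \fa_j^\vfi][\fa_j, \fa_i^\vfi]$ (rather than the individual $[\fa_i, \fa_j^\vfi]$) is what is forced, and invoking the centrality statements (vi)--(viii) to know these corrections commute with everything so the order of multiplication is immaterial. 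A secondary subtlety is the torsion case: one must be careful that replacing $\fa_i^{o(\fa_i)}$ by $1$ inside a commutator is legitimate, which is where relations~\eqref{RR3},~\eqref{RR5},~\eqref{RR6} and Lemma~\ref{lem:basic}(vi) (bounding the order of commutators of commuting elements) enter. Everything else is routine normal-form manipulation in $\nu^q(G)$.
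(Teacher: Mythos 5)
The paper offers no proof of this lemma at all: it is imported verbatim as \cite[Corollary 3.6]{BR}, which is exactly what you propose to do in your final paragraph. Your sketch of how the result is actually established (commutator expansion of $[g,h^\vfi]$ along the polycyclic words, relation~\eqref{RR4} for the hat symbols, and the centrality statements of Lemma~\ref{lem:basic} to collect the correction terms into the diagonal and symmetrised generators) is consistent with the argument in \cite{BR} and, for $q=0$, in \cite{BM}, so apart from some loosely attributed sub-steps (e.g.\ the inverse formula for $\widehat{k^{-1}}$ comes from \eqref{RR4} rather than Lemma~\ref{lem:basic}(v)) there is nothing to correct.
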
 

Now let $G$ be a finitely generated nilpotent group of class two and assume that $G$ is generated 
by $g_1, g_2, \cdots, g_n.$ 
Thus, any element $g \in G$ can be written as 
\begin{equation} \label{eq:expression}
   g = \prod_{i = 1}^{n} g^{m_i}_{i} \prod_{1 \leq j < k \leq n} [g_j, \, g_k]^{l_{jk}},
\end{equation} 
where the exponents $m_i$ and $l_{jk}$ are integers. Consequently, $G$ has the following polycyclic 
generating set 
\begin{equation} \label{eq:polycyclic1}
   \{ g_i, \, 1 \leq i \leq n \} \cup \{ [g_j, \, g_k], \, 1 \leq j < k \leq n \}. 
\end{equation}

The following theorem extends a result of Bacon in 
\cite[Theorem 3.1]{Bacon} (see also \cite{BK}) for all 
$q \geq 0$. We provide a proof for the case $q = 0$ using the commutator approach; the general case follows 
straightforward 
from this case and Lemma~\ref{lem:poly} (ii), but we shall prove it in this case too, for the sake of 
completeness.    

\begin{thm} \label{thm:Bacon} 
Let $G$ be a nilpotent group of class two with $d(G) = n$, then 
\begin{itemize}
   \item[(i)] (\cite[Theorem 3.1]{Bacon}) $d([G,G^{\vfi}]) \leq \frac{n(n^2 + 3n -1)}{3}$;
    \item[(ii)] $d(G \otimes^q G) \leq \frac{n(n^2+3n+2)}{3}$, for all $q \geq 0$; 
    \item[(iii)] In particular, if $G$ has finite exponent $e(G)$ and $\gcd(q, e(G)) = 1$ , 
then \\ $d(G \otimes^q G) \leq n^2$.
\end{itemize}
\end{thm}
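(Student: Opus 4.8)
The plan is to establish the three parts in order, obtaining (ii) and (iii) as consequences of the bound in (i). First I would set up the polycyclic generating sequence \eqref{eq:polycyclic1}, namely $\{g_i \mid 1 \leq i \leq n\} \cup \{[g_j,g_k] \mid 1 \leq j < k \leq n\}$, and invoke Lemma~\ref{lem:poly} (i) to list an explicit generating set for $[G,G^\vfi] \leq \nu^q(G)$. Since $G$ is nilpotent of class $2$, all elements of $G' = \langle [g_j,g_k] \rangle$ are central, so by Corollary~\ref{cor:basic2} (ii) and Lemma~\ref{lem:basic} the generators of the form $[\fa_i^\alpha,(\fb_j)^\beta]$ arising from pairs where at least one entry is a commutator collapse drastically: $[[g_j,g_k], g_\ell^\vfi] = 1$ unless controlled by $G^{ab}$, and in fact $[G', G^\vfi] = [G, G'^\vfi]$ is generated by the $[[g_j,g_k], g_\ell^\vfi]$ which by bilinearity and Lemma~\ref{lem:basic} (ii) reduce to weight-three commutators in the $g_i$ alone. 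The counting then splits into three blocks: the $n$ ``diagonal'' generators $[g_i,g_i^\vfi]$; the generators $[g_i,g_j^\vfi]$ for $i \neq j$ together with the symmetric products, contributing on the order of $n^2$; and the weight-three part $[g_i,g_j^\vfi,g_k]$ spanning $[G',G^\vfi]$, which is a central subgroup (Proposition~\ref{prop:nclass} (ii)) generated by roughly $\binom{n}{3}$-many Jacobi-reduced brackets. Summing these with the Hall--Witt identity \eqref{HallWitt} to eliminate redundancies gives the cubic bound $\frac{n(n^2+3n-1)}{3}$.

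For part (ii) I would then pass from $[G,G^\vfi]$ to $\Upsilon^q(G) = [G,G^\vfi]\GG$. By Lemma~\ref{lem:poly} (ii) the only extra generators needed beyond those of $[G,G^\vfi]$ are the $n$ hat-symbols $\widehat{g_1},\dots,\widehat{g_n}$, since relations \eqref{RR4} express $\widehat{g_i g_j}$ in terms of $\widehat{g_i},\widehat{g_j}$ and tensor elements, and \eqref{RR6} shows $\widehat{[g_j,g_k]}$ already lies in $[G,G^\vfi]$. Hence $d(\Upsilon^q(G)) \leq d([G,G^\vfi]) + n \leq \frac{n(n^2+3n-1)}{3} + n = \frac{n(n^2+3n-1) + 3n}{3} = \frac{n(n^2+3n+2)}{3}$, valid for all $q \geq 0$ (for $q = 0$ the hat-generators are absent and this is just part (i), consistent with $\Upsilon^0(G) = [G,G^\vfi]$). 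This step is essentially bookkeeping once Lemma~\ref{lem:poly} is in hand.

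For part (iii), the hypothesis $\gcd(q, e(G)) = 1$ forces the torsion generators to vanish. By Lemma~\ref{lem:basic} (vi) and Corollary~\ref{cor:basic2}, the order of $[g,h^\vfi][h,g^\vfi]$ divides $\gcd(q, o'(g), o'(h))$ and the order of $[g,g^\vfi]$ divides $\gcd(q, o'(g)^2, 2o'(g))$; since every element order divides $e(G)$ and $\gcd(q,e(G))=1$, all these gcd's equal $1$, so the $n$ diagonal generators $[g_i,g_i^\vfi]$ are trivial and the symmetric relations $[g_i,g_j^\vfi][g_j,g_i^\vfi] = 1$ hold. Likewise relations \eqref{RR5}, \eqref{RR6} and the identities in the Remark (e.g. $\widehat{[g,h]} = [g,h^\vfi]^{-q}$ together with coprimality) kill the weight-three block $[G',G^\vfi]$ and collapse $\GG$ onto $G'G^q$-type data that is already accounted for, leaving at most the $n^2$ generators $[g_i,g_j^\vfi]$, $1 \leq i,j \leq n$. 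Thus $d(G\otimes^q G) \leq n^2$.

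The main obstacle is part (i): producing a generating set of the stated cubic size requires carefully organizing the weight-three commutators $[g_i,g_j^\vfi,g_k]$ and using the Hall--Witt identity \eqref{HallWitt} (equivalently the Jacobi identity, since $G$ has class $2$ so these brackets are central and additive) to show that the naive $n^3$ such brackets reduce to the correct count — this is exactly Bacon's combinatorial core, and transporting it faithfully into the $\nu^q(G)$ commutator calculus, checking that Lemma~\ref{lem:basic} (ii) legitimately identifies all the mixed forms $[g_i, g_j, g_k^\vfi] = [g_i, g_j^\vfi, g_k] = \dots$, is where the real work lies. Everything downstream of (i) is routine.
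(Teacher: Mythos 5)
Your overall route---the polycyclic generating set \eqref{eq:polycyclic1}, Lemma~\ref{lem:poly}, the reductions via Lemma~\ref{lem:basic}, and the Hall--Witt identity \eqref{HallWitt}---is exactly the paper's, and your part (ii) (adjoining the $n$ hat generators modulo $[G,G^\vfi]$ via \eqref{RR4} and \eqref{RR6}) is correct as written. But the count in part (i) that actually produces $\frac{n(n^2+3n-1)}{3}$ is never carried out: you describe the weight-three block as ``roughly $\binom{n}{3}$-many Jacobi-reduced brackets'' and defer the rest to ``Bacon's combinatorial core,'' which is precisely the content of the statement. The correct tally is $n^2$ generators $[g_i,g_j^\vfi]$, plus $n(n-1)$ generators $[g_i,[g_j,g_i]^\vfi]$ with a repeated index, plus, for each of the $\binom{n}{3}$ triples of distinct indices, two generators rather than three, because the Hall--Witt consequence $[g_i,[g_j,g_k]^\vfi]=[g_j,[g_i,g_k]^\vfi]\,[g_k,[g_j,g_i]^\vfi]$ (valid since $\nu(G)$ has class at most $3$) expresses the third in terms of the other two. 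The sum $n^2+n(n-1)+2\binom{n}{3}$ equals $\frac{n(n^2+3n-1)}{3}$; your estimate is off by a factor of two on the distinct-index part and omits the repeated-index family, so the stated bound does not follow from what you wrote.

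In part (iii) there is a second, compensating error. You claim coprimality ``collapses $\GG$ onto $G'G^q$-type data that is already accounted for,'' leaving only the $n^2$ tensors $[g_i,g_j^\vfi]$. This is false: each $\widehat{g_i}$ maps under $\rho'$ to $g_i^q$, and $\gcd(q,e(G))=1$ forces $G^q=G$, so the image of $\GG$ alone already needs $n$ generators and $\GG$ is not absorbed into $[G,G^\vfi]$. What coprimality does kill, via Lemma~\ref{lem:basic}~(vi) and Corollary~\ref{cor:basic2}, are the $n$ diagonal generators $[g_i,g_i^\vfi]$ and the entire weight-three block. The correct bookkeeping is $(n^2-n)$ off-diagonal tensors plus $n$ hats, giving $n^2$; you arrive at the same number only by retaining $n$ generators that are trivial while discarding the $n$ hat generators that are indispensable.
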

\begin{proof}
 On assuming that $G$ is generated by $g_1, \ldots, g_n$ then we obtain the polycyclic generating 
set given 
by \eqref{eq:polycyclic1}. Thus, by Lemma~\ref{lem:poly} (i) we have that $\Upsilon(G) = [G, G^{\vfi}]$ is 
generated by the 
following set of elements: 
\begin{gather*}
\{[g^{\alpha}_i, (g^{\vfi}_j)^{\beta}] \, : \, 1 \leq i, j \leq n \} \cup \\  
\{ [g^{\alpha}_i, ([g_j, g_k]{\vfi})^{\beta}] \, : \, 1 \leq i \leq n, \; 1 \leq j < k \leq n \}  \cup  \\ 
\{[[g_j , g_k]^{\beta}, (g^{\vfi}_i)^{\alpha}] \, : \, 1 \leq i \leq n, \; 1 
\leq j < k \leq n \} \cup  \\ 
\{ [[g_r, g_s]^{\alpha}, ([g_t, g_u]^{\vfi})^{\beta}] \, : \, 1 \leq r < s \leq 
n, \; 1 \leq t < u \leq n \},  \quad \text{where} \quad  \alpha, \beta \in \{-1, 1 \}.
\end{gather*} 
Now by Lemma~\ref{lem:basic}, parts (ii), (iii), (ix), (x), and the fact that $G$ has 
class 2, we can further reduce the above set to obtain  
\begin{equation} \label{generators}
   \{[g_i, g^{\vfi}_j] \, : \, 1 \leq i, j \leq n \} \cup \{[g_i, [g_j, g_k]^\vfi] \, : \, 1 \leq i 
\leq n, \; 1 \leq j < k \leq n \}.
\end{equation}
This new set has $n^2$ generators of the form 
$[g_i, g^{\vfi}_j]$ and 
$n(n-1)$ generators of the form $[g_i, [g_j, g_i]^{\vfi}].$ It remains to count 
the generators of the form $[g_i, [g_j, g_k]^{\vfi}],$ when $i, j, k$ are all 
distincts and $j < k$. 
Now by \cite[Corollary 3.2]{Rocco1} $\nu(G)$ has nilpotency class at most 3 and thus, again 
\[
[g^{\vfi}_j, g^{\vfi}_k, g_i] [g^{\vfi}_k, g^{\vfi}_i, g_j] [g_i, g_j, g^{\vfi}_k] = 1. 
\] 
It then follows that $[g_i, [g_j, g_k]^{\vfi}] = [g_j, [g_i, g_k]^{\vfi}] 
[g_k, [g_j, g_i]^{\vfi}]$. Therefore, 
\[
d([G, G^{\vfi}]) \leq \frac{1}{3} n(n^2 + 3n -1).
\] 
This proves part (i). 

\noindent
(ii): Part (i) also proves (ii) in case $q = 0,$ giving us the better bound for $d(G \otimes G)$. Thus, we 
shall assume 
$q \geq 1.$ Since $\Upsilon^q(G) = [G, G^{\vfi}] \GG$ it suffices to control the number of generators of both 
$[G, G^{\vfi}]$ and $\GG$. We already know by part (i) 
that $[G, G^{\vfi}]$ is generated by the 
set $\{ [g_i, g^{\vfi}_i] \, : \, 1 \leq i,j \leq n\} 
\cup \{[g_i, [g_j, g_k]^{\vfi}] \, : \, 1 \leq i 
\leq n, \; 1 \leq j < k \leq n \}.$ 
Now, by definition the subgroup $\GG$ is 
generated by 
$\widehat{\mathcal{G}} = \{\widehat{g}, g \in G\}$. 
By the defining relations~\eqref{RR4} of $\nu^q(G)$ we have $\widehat{gh} = 
\wg 
\, ( \prod^{q-1}_{i=1}[ g, ( h^{\vfi} )^{-i} ]^{g^{q-1-i}} ) \, \wh,$ for all 
$g, h 
\in G,$ and hence $\widehat{gh} \equiv \wg \, \wh \, 
\pmod [G, G^\vfi],$ for all 
 $\wg, \wh \in \widehat{\mathcal{G}}.$ 
Since every element $g \in G$ has a unique expression in the form~\eqref{eq:expression} and given the 
fact that, for every commutator $[g_j, g_k] \in G',$ 
$\widehat{ [ g_j, g_k ] } = [g_j, g^\vfi_k]^q \in [ G, G^\vfi ]$ (by relations~\eqref{RR6}), we see in later 
stage that $\GG$ 
is 
generated, modulo $[G, G^{\vfi}]$, by the $n$ elements $\widehat{g_1}, \ldots, 
\widehat{g_n}.$ Therefore, we 
conclude  that 
$d(\Upsilon^q(G)) \leq \frac{1}{3}(n^3 + 3n^2 + 2n).$ 
This proves part (ii). 

\noindent
(iii): If in particular $G$ has finite exponent $e(G)$ and $\gcd(q, e(G)) = 1$, then we see by 
Lemma~\ref{lem:basic} (vi) 
that all generators of the forms $[g_i, g^{\vfi}_i]$ 
and $[g_i, [g_j, g_k]^{\vfi}]$ are trivial. Consequently, 
$d(G \otimes^q G) \leq n^2$ 
in this case. The proof is complete. 
\end{proof}
 
In \cite{Aboughazi} Aboughazi computed the nonabelian tensor square of the Heisenberg group 
$\mathcal{H} =  F_2/ \gamma_3(F_2)$, where $F_2$ denotes the free group of rank 2. 
There, it is found that $\cH \otimes \cH \cong \mathbb{Z}^6$, 
thus showing that the bound in Theorem~\ref{thm:Bacon} is sharp. 
Later, Bacon in \cite[Theorem 3.2]{Bacon} computed  
$\mathcal{N}_{n,2} \otimes \mathcal{N}_{n,2}$ for all $n \geq 2$, to show that the bound is also reached for 
the free $n-$generator nilpotent group of class 2, $n > 1$:  
$\mathcal{N}_{n,2} \otimes \mathcal{N}_{n,2}$ is a free abelian group of 
rank $\frac{1}{3} n (n^2 + 3n -1)$. 

It is not difficult to extend these results to the $q-$tensor square $\Upsilon^q(\mathcal{N}_{n,2}), q \geq 1,$ 
to show that the bound in Theorem~\ref{thm:Bacon} (ii) is also attained. 
In fact, the next proposition is but a specialization 
of Corollary~\ref{cor:freenil}.  We write $M(G)$ to denote the Schur multiplier $H_2(G, \mathbb{Z})$ of $G$. 
\begin{prop}
   Let $\cN_{n,2}$ be the free nilpotent group of rank $n > 1$ and class 2, $\cN_{n,2} = F_n / \gamma_3(F_n).$ 
Then, 
\begin{itemize}
  \item[(i)] (\cite[Theorem 3.2]{Bacon}) $\cN_{n,2} \otimes \cN_{n,2}$ is free abelian of rank 
$\frac{1}{3} n (n^2 + 3n -1).$ 
More precisely, 
 $$\cN_{n,2} \otimes \cN_{n,2} \cong \Delta(F^{ab}_n) \times M(\cN_{n, 2}) \times \cN^\prime_{n,2}.$$
    \item[(ii)] $\cN_{n,2} \otimes^q \cN_{n,2} \cong 
(C_q)^{({n + 1 \choose 2} + M_n(3))} \times 
\cN^\prime_{n,2} \cN^q_{n,2},$ 
where $M_n(3) = \frac{1}{3}(n^3 - n)$ is the $q-$rank of  $\gamma_3(\cN_{n,2})/ \gamma_3(\cN_{n,2})^q 
\gamma_4(\cN_{n,2}),$  according to the Witt's formula. Consequently, for $q > 1$  
$$d(\cN_{n,2} \otimes^q \cN_{n,2}) = 
\frac{1}{3}(n^3 + 3n^2 + 2n).$$
\end{itemize}
\end{prop}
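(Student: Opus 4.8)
The plan is to derive both statements as specializations of Corollary~\ref{cor:freenil} with $c = 2$, so that $R = \gamma_3(F_n)$, $[R,F] = \gamma_4(F_n)$, and the relevant quotient becomes $(F_n)'(F_n)^q / \gamma_3(F_n)^q\gamma_4(F_n)$, which is exactly $\cN'_{n,2}\cN^q_{n,2}$ inside $\cN_{n,3} = F_n/\gamma_4(F_n)$. For part (i), I would first invoke Corollary~\ref{cor:free}~(ii) and Corollary~\ref{cor:freenil}~(ii) (the $q=0$ cases) to get $\cN_{n,2}\otimes\cN_{n,2} \cong C_\infty^{\binom{n+1}{2}} \times (\cN_{n,3})'$; then I would identify $C_\infty^{\binom{n+1}{2}} = \Delta^0(F_n^{ab}) = \Delta(F_n^{ab})$ using Theorem~\ref{thm:fg2}~(v), and decompose $(\cN_{n,3})'$ using the lower central structure of the free nilpotent group of class $3$: $(\cN_{n,3})' / \gamma_3 \cong (\cN_{n,2})'$ is free abelian of rank $\binom{n}{2}$ (the degree-2 part of the free Lie ring), while $\gamma_3(\cN_{n,3})$ is free abelian of rank $M_n(3) = \tfrac13(n^3-n)$ by Witt's formula, and this latter piece is precisely $M(\cN_{n,2})$, the Schur multiplier of $\cN_{n,2}$ (since $M(\cN_{n,2}) \cong (R\cap F')/[R,F] = \gamma_3(F_n)/\gamma_4(F_n)$ for the free presentation $\cN_{n,2} = F_n/\gamma_3(F_n)$). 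The extension of $(\cN_{n,3})'$ by these two free abelian pieces splits because everything is free abelian, giving the displayed direct product; a rank count $\binom{n+1}{2} + \binom{n}{2} + \tfrac13(n^3-n) = \tfrac13 n(n^2+3n-1)$ confirms consistency with Theorem~\ref{thm:Bacon}~(i).

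For part (ii), with $q \geq 1$ I would apply Corollary~\ref{cor:freenil}~(i) directly to obtain
\[
\cN_{n,2}\otimes^q\cN_{n,2} \cong C_q^{\binom{n+1}{2}} \times \frac{(F_n)'(F_n)^q}{\gamma_3(F_n)^q\gamma_4(F_n)},
\]
and then identify the second factor with $\cN'_{n,2}\cN^q_{n,2}$ by definition of $\cN_{n,2} = F_n/\gamma_3(F_n)$ and the observation that $\cN'_{n,2}\cN^q_{n,2} = ((F_n)'(F_n)^q\gamma_3(F_n))/\gamma_3(F_n) = (F_n)'(F_n)^q/(\,(F_n)'(F_n)^q \cap \gamma_3(F_n)\,)$, which needs $(F_n)'(F_n)^q \cap \gamma_3(F_n) = \gamma_3(F_n)^q\gamma_4(F_n)$ — this I would justify by passing to the free nilpotent quotient of class $3$ and using that modulo $\gamma_4$ the intersection $\gamma_2 \cdot (\text{$q$-powers}) \cap \gamma_3$ collapses to $\gamma_3^q\gamma_4$, a standard fact about the associated graded Lie ring of the free group. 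To extract the rank of this factor modulo its torsion and establish $d(\cN_{n,2}\otimes^q\cN_{n,2}) = \tfrac13(n^3+3n^2+2n)$ for $q>1$, I would note that $\cN'_{n,2}\cN^q_{n,2}/\gamma_3$-part contributes $\binom{n}{2}$ generators of infinite order (the class-2 commutators), while the class-3 part $\gamma_3(\cN_{n,2})/\gamma_3(\cN_{n,2})^q\gamma_4(\cN_{n,2})$ contributes $M_n(3) = \tfrac13(n^3-n)$ generators of order $q$; adding the $\binom{n+1}{2}$ generators of order $q$ from $C_q^{\binom{n+1}{2}}$ gives total generator count $\binom{n+1}{2} + \binom{n}{2} + \tfrac13(n^3-n) = \tfrac13(n^3+3n^2+2n)$, matching the upper bound of Theorem~\ref{thm:Bacon}~(ii); for $q>1$ none of these generators is superfluous (the $C_q$ factors are genuinely needed since $q>1$ makes them nontrivial and they are not absorbed into $\cN'_{n,2}\cN^q_{n,2}$, as that quotient has no $C_q$-torsion in degree $\leq 2$), so $d = \tfrac13(n^3+3n^2+2n)$ exactly.

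I would spell out the $q$-rank claim via Witt's formula: the number of basic commutators of weight $3$ on $n$ generators is $M_n(3) = \tfrac1{3}\sum_{d\mid 3}\mu(d)n^{3/d} = \tfrac13(n^3 - n)$, and these form a free abelian basis of $\gamma_3(F_n)/\gamma_4(F_n)$, hence modding out by $q$th powers gives a homocyclic group $(C_q)^{M_n(3)}$. Similarly the weight-$2$ basic commutators number $\binom{n}{2} = \tfrac12(n^2-n) = M_n(2)$ and give the free abelian part coming from $(\cN_{n,2})'$.

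The main obstacle I anticipate is the splitting and intersection bookkeeping in part (ii): verifying cleanly that $(F_n)'(F_n)^q \cap \gamma_3(F_n) = \gamma_3(F_n)^q\gamma_4(F_n)$ and that the resulting extension $1 \to (C_q)^{M_n(3)} \to \cN'_{n,2}\cN^q_{n,2} \to (\cN_{n,2})' \to 1$ behaves as expected (in particular that $\cN'_{n,2}\cN^q_{n,2}$ is abelian, which follows since $\gamma_3$ of this group lies in $\gamma_4(\cN_{n,3}) = 1$) requires careful work with the mod-$q$ lower central / dimension-subgroup filtration of the free group, rather than the tensor-square machinery itself. The abelian case $q=0$ (part (i)) is comparatively routine once Corollary~\ref{cor:freenil}~(ii) is in hand, since everything in sight is free abelian and the extensions automatically split.
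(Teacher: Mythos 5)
Your part (i) is essentially the paper's argument and is fine: split $(\cN_{n,3})'=\gamma_2(F_n)/\gamma_4(F_n)$, which is free abelian, along the lower central filtration into $M(\cN_{n,2})\cong\gamma_3(F_n)/\gamma_4(F_n)$ and $\cN'_{n,2}$, and add the $\Delta$-part. Part (ii), however, rests on a false claim. You assert $(F_n)'(F_n)^q\cap\gamma_3(F_n)=\gamma_3(F_n)^q\gamma_4(F_n)$; but $\gamma_3(F_n)\leq (F_n)'\leq (F_n)'(F_n)^q$, so the intersection is all of $\gamma_3(F_n)$, and $\gamma_3(F_n)/\gamma_3(F_n)^q\gamma_4(F_n)\cong C_q^{M_n(3)}\neq 1$ for $q>1$. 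Hence the second factor $(F_n)'(F_n)^q/\gamma_3(F_n)^q\gamma_4(F_n)$ in Corollary~\ref{cor:freenil}(i) is \emph{not} $\cN'_{n,2}\cN^q_{n,2}$; it is an extension of $\cN'_{n,2}\cN^q_{n,2}=(F_n)'(F_n)^q/\gamma_3(F_n)$ by $C_q^{M_n(3)}$ --- precisely the sequence \eqref{eq:Schur2} --- and the entire content of part (ii) is to show that this extension splits, which is where the extra $M_n(3)$ in the exponent of $C_q$ comes from. With your identification the conclusion would read $C_q^{{n+1\choose 2}}\times\cN'_{n,2}\cN^q_{n,2}$, contradicting the statement you are proving; and your later sentence crediting $M_n(3)$ generators of order $q$ to ``the class-3 part'' reintroduces exactly the factor your identification discarded (note also $\gamma_3(\cN_{n,2})=1$, so that quotient must be read in $F_n$ or $\cN_{n,3}$). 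The paper proves the splitting by exhibiting the subgroup of $\Upsilon^q(\cN_{n,2})$ generated by the weight-three commutators $[g_j,g_i,g_k^{\vfi}]$ as a homocyclic complement isomorphic to $H_2(\cN_{n,2},\Z_q)$, the remaining generators $\widehat{g_i},[g_j,g_k^{\vfi}]$ mapping onto $\cN'_{n,2}\cN^q_{n,2}$ under $\rho'$; your proposal never addresses this step.

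Two further slips compound the problem. First, $\cN'_{n,2}\cN^q_{n,2}$ is not abelian for $q\geq 1$, $n\geq 2$: it contains $[x^q,y^q]=[x,y]^{q^2}\neq 1$, so it has class exactly two (this is why the paper remarks that $\cN_{n,2}\otimes^q\cN_{n,2}$ is non-abelian for $q>1$); your proposed sequence $1\to C_q^{M_n(3)}\to\cN'_{n,2}\cN^q_{n,2}\to(\cN_{n,2})'\to 1$ cannot hold, since the middle group is a subgroup of the torsion-free group $\cN_{n,2}$. Second, your generator count $ {n+1\choose 2}+{n\choose 2}+M_n(3)$ equals $\tfrac13 n(n^2+3n-1)$, not the claimed $\tfrac13(n^3+3n^2+2n)$: you are short by the $n$ generators $\widehat{g_1},\dots,\widehat{g_n}$. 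Indeed $(\cN'_{n,2}\cN^q_{n,2})^{ab}\cong\Z^n\oplus\Z_{q^2}^{{n\choose 2}}$, so this factor requires ${n+1\choose 2}$ generators, not ${n\choose 2}$.
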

\begin{proof}
   As in the proof of Corollary~\ref{cor:freenil}, using Lemma~\ref{lem:qext} we can write 
\[
G \wedge^q G \cong F^\prime_n F^q_n / [R, F_n] R^q.
\]
In view of \eqref{eq:Schur},
$$H^2(G, \Z_q) \cong R \cap F^\prime_n F^q_n / R^q [R, F_n].$$ 

\noindent
Taking into account that $R = \gamma_3(F_n) \leq F^\prime_n 
F^q_n$ we have the  exact sequence 
\begin{equation} \label{eq:Schur2}
1 \to \frac{\gamma_3(F_n)}{\gamma_4(F_n) \gamma_3(F_n)^q} 
\to \frac{F^\prime_n F^q_n}{\gamma_4(F_n) \gamma_3(F_n)^q} 
\to \frac{F^\prime_n F^q_n}{\gamma_3(F_n)} \to 1.
\end{equation}
Here we find that $H_2(\cN_{n,2}, \mathbb{Z}_q) \cong 
\gamma_3(F_n)/ \gamma_4(F_n) \gamma_3(F_n)^q \cong {\mathbb{Z}}^{M_n(3)}_q$, 
for all $q \geq 0,$ where, by the Witt's formula for the rank of 
$\gamma_r(F_n)/ \gamma_{r+1}(F_n),$ 
$$ 
M_n(r) = \frac{1}{r} \sum_{d \mid r} \mu(d) n^{\frac{r}{d}},
$$ 
where $\mu$ denotes the M\"obius function. 

\noindent
(i): If $q = 0$ then the exact sequence \eqref{eq:Schur2} splits and thus we have, by 
Corollary~\ref{cor:freenil}, 
\[
  \mathcal{N}_{n,c} \otimes \mathcal{N}_{n,c} \cong 
C^{n+1 \choose 2}_{\infty} \times \gamma_3(F_n) / \gamma_4(F_n) \times 
\gamma_2(F_n) / \gamma_3(F_n)  \cong C^{n+1 \choose 2}_{\infty} 
\times C^{n \choose 2}_{\infty} \times C^{\frac{1}{3}(n^3 - n)}_{\infty}.
\]
This is the result of Bacon, that $d(\cN_{n,2} \otimes \cN_{n,2}) = \frac{1}{3}n(n^2 + n -1)$. 

\noindent
(ii): If $q > 1$ then we see by the generators of $\cN_{n,2} \otimes^q \cN_{n,2}$ found in 
Theorem~\ref{thm:Bacon} that the 
image by $\rho^\prime$ of the subgroup 
$\langle \widehat{g_i}, [g_j, g^\vfi_k] \; \vert 
\; 1 \leq i \leq n, \, 1 \leq k < j \leq n \rangle$ 
is the subgroup $\cN^\prime_{n,2} \cN^q_{n,2}$ of $\cN_{n,2},$ while the subgroup 
$\langle [gj, g_i, g^\vfi_k] \rangle$, where 
$[gj, g_i, g_k]$ is a basic commutator of 
$\gamma_3(F_n) / \gamma_4(F_n),$ 
is isomorphic to 
$\gamma_3(F_n)/ \gamma_4(F_n) \gamma_3(F_n)^q \cong H_2(\cN_{n,2}, \mathbb{Z}_q)$,   
a homocyclic abelian group of exponent $q$ and $q-$rank $M_n(3) = \frac{1}{3}(n^3 - n).$ 
Consequently, also in this case we get that the sequence 
\eqref{eq:Schur2} splits and we then find that 
$$\cN_{n,2} \otimes^q \cN_{n,2} \cong 
C^{{n+1 \choose 2} + M_n(3)}_q \times 
\cN^\prime_{n,2} \cN^q_{n, 2}.$$
Therefore, 
$d(\cN_{n,2} \otimes^q \cN_{n,2}) = \frac{1}{3}n(n^2 + 3n + 2)$, thus showing that the upper bound given in 
Theorem~\ref{thm:Bacon} (ii) is also achieved when $q > 1.$ 
\end{proof}

\vskip 20pt

\end{document}